	\newcolumntype{C}[1]{>{\centering}m{#1}}
\newcommand{\CC}{\mathbbm{C}}
\newcommand{\EE}{\mathbb{E}}
\newcommand{\NN}{\mathbb{N}}
\newcommand{\QQ}{\mathbb{Q}}
\newcommand{\RR}{\mathbb{R}}
\newcommand{\One}{\mathbbm{1}}
\newcommand{\one}{\mathbf{1}}
\newcommand{\zero}{\mathbf{0}}
\newcommand{\Bb}{\mathcal{B}}
\newcommand{\Ee}{\mathcal{E}}
\newcommand{\Ll}{\mathcal{L}}
\newcommand{\Mm}{\mathcal{M}}
\newcommand{\Nn}{\mathcal{N}}
\newcommand{\Xx}{\mathcal{X}}
\newcommand{\Zz}{\mathcal{Z}}
\newcommand{\EEE}[1]{\underset{#1}{\EE}}
\newcommand{\rnderiv}[2]{\frac{\dee #1}{\dee #2}}
\def\[#1\]{\begin{equation}\begin{aligned}#1\end{aligned}\end{equation}}
\def\*[#1\]{\begin{align*}#1\end{align*}}
\newcommand{\st}{\ \text{s.t.}\ }
\newcommand{\andT}{\ \text{and}\ }
\DeclareMathOperator*{\Cov}{Cov}
\DeclareMathOperator*{\sdev}{StdDev}
\DeclareMathOperator{\proj}{proj}
\DeclareMathOperator{\range}{range}
\DeclareMathOperator*{\esssup}{\text{ess}\sup}
\newcommand{\lcr}[3]{\left #1 #2 \right #3}
\newcommand{\lcrx}[4][{-1}]{
	\IfEq{#1}{-1}{\left #2 {{{{#3}}}} \right #4}{
   	\IfEq{#1}{0}{#1 {{{{#2}}}} #3}{
	\IfEq{#1}{1}{\bigl #2 {{{{#3}}}} \bigr #4}{
	\IfEq{#1}{2}{\Bigl #2 {{{{#3}}}} \Bigr #4}{
	\IfEq{#1}{3}{\biggl #2 {{{{#3}}}} \biggr #4}{
	\IfEq{#1}{4}{\Biggl #2 {{{{#3}}}} \Biggr #4}{
    \GenericWarning{"4th argument to lcrx must be -1, 0, 1, 2, 3, or 4"}
    }}}}}}}
\newcommand{\abs}[2][{-1}]{\lcrx[#1] \vert {#2} \vert }
\newcommand{\set}[2][{-1}]{\lcrx[#1] \{ {#2} \}}
\newcommand{\floor}[2][{-1}]{\lcrx \lfloor {#2} \rfloor}
\newcommand{\ceil}[2][{-1}]{\lcrx[#1] \lceil {#2} \rceil}
\newcommand{\norm}[2][{-1}]{\lcrx[#1] \Vert {#2} \Vert}
\newcommand{\Norm}[2][{-1}]{\lcrx[#1] \vert{\kern-0.25ex{\lcrx[#1] \vert{\kern-0.25ex{\lcrx[#1] \vert {#2} \vert}\kern-0.25ex}\vert}\kern-0.25ex}\vert}
\newcommand{\inner}[3][{-1}]{\lcrx[#1] \langle {{#2},\ {#3}} \rangle}
\newcommand{\rbra}[2][{-1}]{\lcrx[#1] ( {#2} ) }
\newcommand{\sbra}[2][{-1}]{\lcrx[#1] [ {#2} ] }
\newcommand{\stk}[2]{\ensuremath{\stackrel{\text{#2}}{#1}}}
\newcommand{\stkm}[2]{\ensuremath{\stackrel{#2}{#1}}}
\newcommand{\normpi}[1]{\norm{#1}_{L_2(\pi)}}
\newcommand{\normpiD}[1]{\norm{#1}_{L'_2(\pi)}}
\newcommand{\normpiDE}[1]{\norm{#1}_{L'_2(\pi_\epsilon)}}
\newcommand{\normpiR}[1]{\norm{#1}_{L_{2,0}(\pi)}}
\newcommand{\normpiO}[1]{\norm{#1}_{L_1(\pi)}}
\newcommand{\normpiInf}[1]{\norm{#1}_{L_\infty(\pi)}}
\newcommand{\normpiE}[1]{\norm{#1}_{L_2(\pi_\epsilon)}}
\newcommand{\normTV}[1]{\norm{#1}_{\text{TV}}}
\newcommand{\dstar}{{{}_{\star\star}}}
\newcommand{\ol}{\overline}
\newcommandx{\dd}[4][1, 2=\mathrm{d}, 4]{\ensuremath{\frac{#2^{#4}#1}{#2#3^{#4}}}}
\newcommandx{\pp}[4][1, 2=\partial, 4]{\ensuremath{\frac{#2^{#4}#1}{#2#3^{#4}}}}
\newcommand{\dee}{\mathrm{d}} %
\DeclareMathOperator*{\newlim}{\mathrm{lim}\vphantom{\mathrm{infsup}}}
\DeclareMathOperator*{\newmin}{\mathrm{min}\vphantom{\mathrm{infsup}}}
\DeclareMathOperator*{\newmax}{\mathrm{max}\vphantom{\mathrm{infsup}}}
\DeclareMathOperator*{\newinf}{\mathrm{inf}\vphantom{\mathrm{infsup}}}
\DeclareMathOperator*{\newsup}{\mathrm{sup}\vphantom{\mathrm{infsup}}}
\renewcommand{\lim}{\newlim}
\renewcommand{\min}{\newmin}
\renewcommand{\max}{\newmax}
\renewcommand{\inf}{\newinf}
\renewcommand{\sup}{\newsup}
\newcommand\Ind[1]{\One_{#1}} %
\newcommand{\union}{\cup}
\newcommand{\Nats}{\NN}
\newcommand{\Rationals}{\QQ}
\newcommand{\Reals}{\RR}
\newcommand{\PosReals}{\Reals_+}
\newcommand{\NatsO}{{\Nats\union\set{0}}}
\def\multiset#1#2{\ensuremath{\left(\kern-.3em\left(\genfrac{}{}{0pt}{}{#1}{#2}\right)\kern-.3em\right)}}
\newcommand{\subalign}[1]{%
  \vcenter{%
    \Let@ \restore@math@cr \default@tag
    \baselineskip\fontdimen10 \scriptfont\tw@
    \advance\baselineskip\fontdimen12 \scriptfont\tw@
    \lineskip\thr@@\fontdimen8 \scriptfont\thr@@
    \lineskiplimit\lineskip
    \ialign{\hfil$\m@th\scriptstyle##$&$\m@th\scriptstyle{}##$\crcr
      #1\crcr
    }%
  }
}
\definecolor{NavyBlue}{rgb}{0.1,0.1,0.6}
\algrenewcommand\algorithmicrequire{\textbf{Precondition:}}
\algrenewcommand\algorithmicensure{\textbf{Postcondition:}}
\newcommand\restrict[1]{\vert_{#1}}
\newcommand{\tv}{\text{TV}}
\newcommand{\epseps}{\varphi}
    \let\Cref\crtCref
    \let\cref\crtcref
\crefname{lemma}{Lemma}{Lemmas}
\crefname{corollary}{Corollary}{Corollaries}
\crefname{theorem}{Theorem}{Theorems}
\let\reftagform@=\tagform@
\def\tagform@#1{\maketag@@@{\ignorespaces\textcolor{gray}{(#1)}\unskip\@@italiccorr}}
\renewcommand{\eqref}[1]{\textup{\reftagform@{\ref{#1}}}}
\newtheoremstyle{break}
  {\topsep}{\topsep}%
  {\itshape}{}%
  {\bfseries}{}%
  {\newline}{}%
\theoremstyle{break}
\declaretheorem[style=break,numberwithin=section,name=Theorem]{theorem}
\declaretheorem[style=break,sibling=theorem,name=Lemma]{lemma}
\declaretheorem[style=break,sibling=theorem,name=Proposition]{proposition}
\declaretheorem[style=break,sibling=theorem,name=Corollary]{corollary}
\declaretheorem[style=break,sibling=theorem,name=Definition]{definition}
\declaretheorem[style=break,qed=$\triangleleft$,sibling=theorem,name=Example]{example}
\declaretheorem[style=break,qed=$\triangleleft$,sibling=theorem,name=Remark]{remark}
\declaretheorem[name=Assumption, numbered=no]{assumption*}
\numberwithin{equation}{section}
\numberwithin{theorem}{section}
\title{Approximations of Geometrically Ergodic Reversible Markov Chains}
\author{Jeffrey Negrea and Jeffrey S. Rosenthal}
\begin{document}

\maketitle
\abstract{A common tool in the practice of Markov Chain Monte Carlo is to use approximating transition kernels to speed up computation when the desired kernel is slow to evaluate or intractable.
A limited set of quantitative tools exist to assess the relative accuracy and efficiency of such approximations.
We derive a set of tools for such analysis based on the Hilbert space generated by the stationary distribution we intend to sample, $L_2(\pi)$.
Our results apply to approximations of reversible chains which are geometrically ergodic, as is typically the case for applications to Markov Chain Monte Carlo.
The focus of our work is on determining whether the approximating kernel will preserve the geometric ergodicity of the exact chain, and whether the approximating stationary distribution will be close to the original stationary distribution.
For reversible chains, our results extend the results of \citet{johndrow2015approximations} from the uniformly ergodic case to the geometrically ergodic case, under some additional regularity conditions.
We then apply our results to a number of approximate MCMC algorithms.}

\section{Introduction}

The use of Markov Chain Monte Carlo (MCMC) arises from the need to sample from probabilistic models when simple Monte Carlo is not possible.
The procedure is to simulate a positive recurrent Markov process where the stationary distribution is the measure one intends to sample, so that the dynamics of the process converge to the distribution required.
Temporally correlated samples may then be used to approximate various expectations; see e.g.~\citet{handbook} and the many references therein.
Examples of common applications may be found in hierarchical models, spatio-temporal models, random networks, finance, bioinformatics, etc.

Often, however, the transition dynamics of the Markov Chain required to run this process exactly are too computationally expensive due to prohibitively large datasets, intractable likelihoods, etc.
In such cases it is tempting to instead \emph{approximate} the transition dynamics of the Markov process in question, either deterministically as in the low-rank Gaussian approximation of \citet{johndrow2015approximations}, or stochastically as in the noisy Metropolis--Hastings procedure of \citet{alquier2016noisy}.
It is important then to understand whether these approximations will yield stable and reliable results.
This paper aims to provide quantitative tools for the analysis of these algorithms.
Since the use of approximation for the transition dynamics may be interpreted as a \emph{perturbation} of the transition kernel of the exact MCMC algorithm, we focus on bounds on the convergence of perturbations of Markov chains.

The primary purpose of this paper is to extend existing quantitative bounds on the errors of approximate Markov chains from the uniformly ergodic case in \citep{johndrow2015approximations} to the geometrically
ergodic case (a weaker condition, for which multiple equivalent definitions may be found in \citet{roberts1997geometric}).
Our work will extend the theoretical results of \citep{johndrow2015approximations} in the case that the exact chain is reversible, replacing the total variation metric with $L_2$ distances, and relaxing the uniform contraction condition to $L_2(\pi)$-geometric ergodicity.

\subsection{Geometric Ergodicity}
When analyzing the performance of exact MCMC algorithms, it is natural to decompose the error in approximation of expectations into a component for the transient phase error of the process and one for the Monte-Carlo approximation error.
The former may be interpreted as the bias due to not having started the process
in the stationary distribution.
A Markov chain is \emph{geometrically ergodic} if, from a suitable initial distribution $\nu$, the marginal distribution of the $n^{\text{th}}$ iterate of the chain converges to the stationary distribution, with an error that decays as $C(\nu) \rho^n$ for some $\rho\in(0,1)$ and some constant depending on the initial distribution $C(\nu)$, in some suitable metric on the space of probability measures.
The geometric ergodicity condition essentially dictates that the transient phase error of the $n^{\text{th}}$ sample decays exponentially quickly in $n$.
The chain is \emph{uniformly} (geometrically) ergodic if $C$ can be chosen independently of the initial distribution.
Geometric ergodicity is a desirable property as it ensures that cumulative transient phase error asymptotically does not dominate the Monte-Carlo error, while still being less restrictive than the uniform ergodicity condition, which often fails when the state space is not finite or compact (for example, an AR($1$) process is geometrically ergodic but not uniformly ergodic).

When using approximate MCMC methods, one desires that the approximation
preserves geometric ergodicity, so that convergence to stationarity is still fast
and the transient phase error goes to zero quickly. This is an important issue, especially since \citet{medina2016stability} have shown that intuitive approximations such as Monte-Carlo within Metropolis may lead to transient approximating chains.

\subsection{Outline of the Paper}

The outline of this paper is as follows.
\cref{sec-previouswork} reviews related work.
Then \cref{sec-perturbationbounds} contains our main theoretical results and their proofs.
\cref{combinedthm} therein provides bounds on the distance between stationary distributions, and gives a sufficient condition for the perturbed chain to be geometrically ergodic in $L_2(\pi)$, where $\pi$ is the stationary distribution of the unperturbed chain.
\cref{l1l2geomthm} and \cref{l1l2geomthm-rev} give sufficient conditions for the perturbed chain to be geometrically ergodic according to several other variants of the definition of geometric ergodicity (for different metrics and families of initial distributions), and provide quantitative rates when possible.
The remainder of \cref{sec-perturbationbounds} establishes bounds on autocorrelations, and mean-squared-error for Monte Carlo estimates of expected values computed with the perturbed chain.

Finally, \cref{sec-noisy} considers noisy and/or approximate Metropolis--Hastings algorithms.
It provides sufficient conditions that one can check in order for our results from \cref{sec-perturbationbounds} to be applied.
We use this to study Metropolis--Hastings with deterministic approximations to the target density, as well as the Monte Carlo within Metropolis algorithm, as in \citet{medina2019perturbation}, and provide some examples of how these types of approximations might arise in practice.

\section{Related Work}
\label{sec-previouswork}

This section presents a brief review of related work, discussing convergence of perturbed Markov chains in the uniformly ergodic and geometrically ergodic cases with varying metrics and additional assumptions. The results in the literature have a wide range of assumptions required and a wide range of scopes for their various results. The results for uniformly ergodic chains have a simpler aesthetic, in line with what intuition for finite state space chains might inspire, as they do not require drift and minorization conditions to state. Our results cover the geometrically ergodic and reversible case, and use properties of reversibility to match the simpler aesthetic found in the literature for the uniformly ergodic case.

Close to the present paper, \citet{johndrow2015approximations} derive perturbation bounds to assess the robustness of approximate MCMC algorithms.
The assumptions upon which their results rely are: the original chain is uniformly contractive in the total variation norm (this implies uniform ergodicity); and the perturbation is sufficiently small (in the operator norm induced by the total variation norm). The main results of their paper are:
the perturbed kernel is uniformly contractive in the total variation norm;
the perturbed stationary distribution is close to the original stationary distribution in total variation;
explicit bounds on the total variation distance between finite time approximate sampling distributions and the original stationary distribution;
explicit bounds on total variation difference between the original stationary distribution and the mixture of finite time approximate sampling distributions;
and explicit bounds on the MSE for integral approximation using approximate kernel and the true kernel.
The results derived by \citep{johndrow2015approximations} are applied within the same paper to a wide variety of approximate MCMC problems
including low rank approximation to Gaussian processes and sub-sampling approximations. In other work, \citet{johndrow2017coupling}, use intuitive coupling arguments to establish similar results under the same uniform contractivity assumption.

Further results on perturbations for uniformly ergodic chains may be found in \citet{mitrophanov2005sensitivity}.
This work is motivated in part by numerical rounding errors.
Various applications of these results may be found in \citet{alquier2016noisy}.
The only assumption of \citep{mitrophanov2005sensitivity} is that the original chain is uniformly ergodic.
The paper is unique in that it makes no assumption regarding the proximity of the original and perturbed kernel, though the level of approximation
error does still scale linearly with the total variation distance of the original and perturbed kernels.
The main results are: explicit bounds on the total variation distance between finite time sampling distributions; and explicit bounds on the total variation distance between stationary distributions.

The work of \citet{roberts1998convergence} (see also \citet{breyer2001}) is also motivated by numerical rounding errors.
The perturbed kernel is assumed to be derived from the original kernel by a \textit{round-off function}, which e.g.\ maps the input to nearest multiple of $2^{-31}$.
In such cases, the new state space is at most countable while the old state space may have been uncountable and so the resulting chains have mutually singular marginal distributions at all finite times and mutually singular stationary distributions (if they have stationary distributions at all).
The results of \citep{roberts1998convergence} require the analysis of Lyapunov drift conditions and drift functions (which we will avoid by
working in an appropriate $L_2$ space).
The key assumptions in \citep{roberts1998convergence} are:
the original kernel is geometrically ergodic, and $V$ is a Lyapunov drift function for the original kernel;
the original and perturbed transition kernels are close in the $V$-norm;
the perturbed kernel is defined via a round-off function with round-off error uniformly sufficiently small;
and $\log V$ is uniformly continuous.
The main results of the paper include that:
if the perturbed kernel is sufficiently close in the $V$-norm then geometric ergodicity is preserved;
if the drift function, $V$, can be chosen so that $\log V$ is uniformly continuous and if the round-off errors can be made arbitrarily small then the kernels can be made arbitrarily close in the $V$-norm;
explicit bounds on the total variation distance between the approximate finite-time sampling distribution and the true stationary distribution;
and sufficient conditions for the approximating stationary distribution to be arbitrarily close in total variation to the true stationary distribution.
They also prove results that do not require closeness in the $V$-norm, or even absolute continuity of the perturbed transitions; in such cases they show that a suitable drift condition on the original chain together with a uniformly small round-off error yields perturbed chains which are geometrically ergodic, and that the stationary measure varies continuously under such perturbations in the topology of weak convergence.

\citet{pillai2014ergodicity} provide bounds in terms of the Wasserstein topology (cf.\ \citet{gibbs2004}).
Their main focus is on approximate MCMC algorithms, especially approximation due to sub-sampling from a large dataset (e.g., when computing the posterior density).
Their underlying assumptions are:
the original and perturbed kernels satisfy a series of \textit{drift-like conditions} with shared parameters;
the original kernel has finite eccentricity for all states (where eccentricity of a state is defined as the expected distance between the state and a sample from the stationary distribution);
the \textit{Ricci curvature} of the original kernel has a non-trivial uniform lower bound on a positive measure subset of the state space;
and the transition kernels are close in the Wasserstein metric, uniformly on the mentioned subset.
Their main results under these assumptions are:
explicit bounds on the Wasserstein distance between the approximate sampling distribution and the original stationary distribution;
explicit bounds on the total variation distance of the original and perturbed stationary distributions and bounds on the mixing times of each chain;
explicit bounds on the bias and $L_1$ error of Monte Carlo approximations;
decomposition of the error from approximate MCMC estimation into components from \textit{burn-in}, \textit{asymptotic bias}, and \textit{variance}; and rigorous discussion of the trade-off between the above error components.

\citet{rudolf2015perturbation} also use the Wasserstein topology.
They focus on approximate MCMC algorithms, with applications to auto-regressive processes and stochastic Langevin algorithms for Gibbs random fields.
Their results use the following assumptions:
the original kernel is Wasserstein ergodic;
a Lyapunov drift condition for perturbed kernel is given, with drift function $\tilde{V}$;
$\tilde{V}$ has finite expectation under the initial distribution;
and the perturbation operator is uniformly bounded in a $\tilde{V}$-normalized Wasserstein norm.
Their main results are:
explicit bounds on the Wasserstein distance and weighted total variation distance between the original and perturbed finite time sampling distributions;
and explicit bounds on the Wasserstein distance between stationary distributions.

\citet{ferre2013regular} build upon \citet{keller1999stability} to provide perturbation results for $V$-geometrically ergodic Markov chains using a simultaneous drift condition. They show that any perturbation to the transition kernel which shares its drift condition has a stationary distribution, is also $V$-geometrically ergodic, and that the perturbed stationary distributions is close to the original one. The assumption of a shared drift condition may be difficult to verify or not hold in some cases of interest related to approximate or noisy Markov chain Monte Carlo.
\citet{herve2014approximating} considers finite rank approximations to a transition kernel. That work gives sufficient conditions for approximations to inherit $V$-geometric ergodicity and provides a quantitative relationship between the rates of convergence and bounds the total variation distance between stationary measures. It also provides sufficient conditions for $V$-geometric ergodicity of a family of finite-rank approximations to a transition kernel to guarantee geometric ergodicity of the kernel, and provides a quantitative rates of convergence. In both of these results, as in \citep{ferre2013regular}, the results depend on a simultaneous drift condition for the approximations and the original kernel.

Each of the above papers demonstrate bounds on various measures of error from using approximate finite-time sampling distributions and approximate ergodic distributions to calculate expectations of functions.
On the other hand, the assumptions underlying the results vary dramatically.
The results for uniformly ergodic chains are based on simpler and more intuitive assumptions than those for geometrically ergodic chains.
Our work extends these results to geometrically ergodic chains and perturbations while preserving essentially the same level of simplicity
in the assumptions. In particular we avoid the need to identify a Lyapunov drift condition, and our assumptions are expressed directly in terms of transition kernels, rather than a relationship between drift conditions which they satisfy.

\section{Perturbation Bounds}
\label{sec-perturbationbounds}

This section extends the main results of \citet{johndrow2015approximations} to the $L_2(\pi)$-geometrically ergodic case for reversible processes, assuming the perturbation $P-P_{\epsilon}$ has bounded $L_{2}(\pi)$ operator norm.

\subsection{Definitions and Notation}

Let $\pi$ be a probability measure on a measurable space $(\Xx, \Sigma)$. We make considerable use of the following norms on signed measures and their corresponding Banach spaces.
\*[
    \normTV{\lambda}
    	& = \sup_{A\in\Sigma} \abs{\lambda(A)}
			&
			\Mm(\Sigma)
	    	& = \set{\text{bounded signed measures on }(\Xx, \Sigma)}
				\\
    \normpi{\lambda}
    	& = \rbra{\int\lcr({\rnderiv\lambda\pi})^2 \dee \pi}^{1/2}
			&
			L_2(\pi)
	    	& = \set{\nu\ll\pi : \normpi{\nu} <\infty }\\
    \normpiR{\cdot}
      	& = \normpi{\cdot}\restrict{L_{2,0}(\pi)}
				&
				L_{2,0}(\pi)
		    	& = \set{\nu\in L_2(\pi) :\nu(\Xx) = 0}\\
    \normpiO{\lambda}
    	& = \int\abs{\rnderiv\lambda\pi} \dee \pi
			&
			L_{1}(\pi)
	    	& = \set{\nu\ll\pi : \normpiO{\nu} <\infty}\\
    \norm{\lambda}_{L_{\infty}(\pi)}
	       & =\esssup_{X\sim\pi}\rnderiv{\lambda}{\pi}(X)
				 &
				 L_{\infty}(\pi)
		 	& = \set{\nu\ll\pi : (\exists b>0)\lcr({\abs{\rnderiv{\nu}{\pi}}<b \quad \pi\text{-a.e.}})}
\]
Note that $L_{2,0}(\pi)$ is a complete subspace of $L_2(\pi)$.
Let
\*[
\Mm_{+,1} = \set{\lambda\in\Mm: [\forall A \in \Sigma\quad  \lambda(A)\geq 0]  \andT [\lambda(\Xx) = 1]}
\]
be the set of probability measures on $(\Xx, \Sigma )$.
Note that for any probability measure, $\pi$, $L_\infty(\pi)\subset  L_2(\pi) \subset L_1(\pi) \subset \Mm(\Sigma)$, though in general they are not complete subspaces of each other when their corresponding norms are not equivalent.
For a norm, $\norm{\cdot}$ on a vector space, we also write $\norm\cdot$ the corresponding operator norm on the space of bounded linear operators from $V$ to itself, $\Bb(V)$.

\begin{definition}[Geometric Ergodicity]\label{def:ge}
Let $P$ be the kernel of a positive recurrent Markov chain with invariant  measure $\pi$. Let $\lambda$ be any measure with $\pi\ll\lambda$. Suppose that $\rho_\tv,\rho_1,\rho_2 \in (0,1)$. Then:
\begin{itemize}
\item[(i)] $P$ is \textbf{$\pi$-a.e.-TV geometrically ergodic with factor $\rho_\tv$} if there exists $C_\tv:\Xx\to\PosReals$ such that for $\pi$-almost every $x\in\Xx$ and for all $n\in\Nats$:
\*[
	\normTV{\delta_x P^n - \pi} \leq C_\tv(x) \rho_\tv^n \ .
\]
The \emph{optimal rate} for $\pi$-a.e.-TV geometric ergodicity is the infimum over factors for which the above definition holds;
\[
	\rho^\star_\tv = \inf\lcr\{{\rho>0 \st }.&\exists C:\Xx\to\PosReals \text{ with }  \pi(\set{x:C(x)<\infty})=1 \andT  \\
	& \lcr.{\forall n\in\Nats, \pi\text{-a.e. }x\in\Xx \quad \normTV{\delta_x P^n - \pi} \leq C(x) \rho^n}\} \ .
\]
\item[(ii)] $P$ is \textbf{$L_2(\lambda)$-geometrically ergodic with factor $\rho_2$} if $P: L_2(\lambda)\to L_2(\lambda)$ and there exists $C_2: L_2(\lambda)\cap\Mm_{+,1}\to\PosReals$ such that for every $\nu\in L_2(\lambda)\cap\Mm_{+,1}$ and for all $n\in\Nats$:
\*[
	\norm{\nu P^n - \pi}_{L_2(\lambda)} \leq C_2(\nu) \rho_2^n \ .
\]
The \emph{optimal rate} for $L_2(\lambda)$-geometric ergodicity is the infimum over factors for which the above definition holds;
\*[
	\rho^\star_2
		& = \inf\lcrx[2]\{{\rho>0 \st \exists C:L_2(\lambda)\cap\Mm_{+,1}\to\PosReals \text{ with } }. \\
		&\qquad \qquad \lcrx[2].{\forall n\in\Nats, \nu\in L_2(\lambda)\cap\Mm_{+,1} \quad \norm{\nu P^n - \pi}_{L_2(\lambda)} \leq C(\nu) \rho^n}\} \ .
\]
\end{itemize}
\end{definition}
\begin{remark}
If $P$ is $\pi$-reversible and aperiodic then $P$ is $L_2(\pi)$-geometrically ergodic if and only if it is $\pi$-a.e. TV geometrically ergodic, as per \citet{roberts1997geometric}. In this case the optimal rate of $L_2(\pi)$-geometric ergodicity, $\rho_2^\star$, is equal to the spectral radius of $P\restrict{L_{2,0}(\pi)}$,
In this case, the spectrum of $P$ is a subset of $[-\rho_2^\star,\rho_2^\star]\union\set{1}$, and $P$ is $L_2(\pi)$-geometrically ergodic with factor $\rho_2^\star$ and $C(\mu) = \normpi{\mu-\pi}$.
For more details see \cref{rr97thm2}, and \citep{roberts1997geometric}.
\end{remark}
We abbreviate \emph{geometric ergodicity} and \emph{geometrically ergodic} as ``GE'' for brevity going forward.

\subsection{Assumptions}
\label{sec-assump}

We assume throughout that $P$ is the transition kernel for a Markov chain on a countably generated state space $\mathcal{X}$ with $\sigma$-algebra $\Sigma$, which is reversible with respect to a stationary probability measure, $\pi$, and is $\pi$-irreducible and aperiodic.
We call the Markov chain induced by $P$ the ``original'' chain.
The $\pi$-reversibility of $P$ makes it natural to work in $L_2(\pi)$ since, in this case, $P$ is a self-adjoint linear operator on a Hilbert space. This allows us access to the rich, elegant, and mature spectral theory of such operators. See for example \citep[Chapter~12]{rudin1991functional} and \citep[Chapter~22]{douc2018markov}.
We further assume that $P$ is $L_2(\pi)$-geometrically ergodic with factor $0<(1-\alpha)<1$.
Equivalent definitions of $L_2(\pi)$-geometrically ergodic are given in \cref{rr97thm2}.
This assumption is weaker than the Doeblin condition used by \citep{johndrow2015approximations}, which implies uniform ergodicity.

Next, we assume that $P_\epsilon$ is a second (``perturbed'') transition kernel, with $\normpi{P-P_\epsilon} \leq \epsilon $ for some fixed $\epsilon>0$, and that $P_\epsilon \restrict{L_2(\pi)} \in \Bb(L_2(\pi))$, i.e. that the perturbed transition kernel maps $L_2(\pi)$ measures to $L_2(\pi)$ measures.
The norm condition quantifies the intuition that the perturbation is ``small''.
We assume that $P_\epsilon$ is $\pi$-irreducible and aperiodic.
We demonstrate (in \cref{combinedthm}) that under these assumptions $P_\epsilon$ has a unique stationary distribution, denoted by $\pi_\epsilon$, with $\pi_\epsilon\in L_2(\pi)$.

Note that when $\mu\in L_1(\pi)$ we have $\normTV{\mu-\pi} =\frac{1}{2}\normpiO{\mu-\pi}$.
On the other hand, $\normTV{\cdot}$ applies to all bounded measures, while $\normpiO{\cdot}$ applies only to the subspace of $L_1(\pi)$ measures.
Note also that if $\pi \sim \pi_\epsilon$ (the two measures are mutually absolutely continuous), then $L_1(\pi)$ and $L_1(\pi_\epsilon)$ are equal as spaces and their norms are always equal, so in this case we need not distinguish between them.

To summarize, we assume that
\begin{assumption*}[Assumptions of \cref{sec-assump}]
\begin{minipage}[h]{0.9\textwidth}
		\begin{multicols}{2}
				\begin{itemize}
					\item $P$ is a Markov kernel that is
					\begin{itemize}
						\item $\pi$-reversible for a prob. meas. $\pi$,
						\item irreducible and aperiodic
						\item $L_2(\pi)$-GE with factor $(1-\alpha)$,
					\end{itemize}
				\end{itemize}
				\begin{itemize}
					\item $P_\epsilon$ is a Markov kernel that is
					\begin{itemize}
						\item irreducible and aperiodic,
						\item $P_\epsilon : L_2(\pi) \to L_2(\pi)$, and
						\item $\normpi{P-P_\epsilon}<\epsilon$.
					\end{itemize}
				\end{itemize}
			\end{multicols}
	\end{minipage}
\end{assumption*}

	The assumption that $P_\epsilon: L_2(\pi)\to L_2(\pi)$ and that $\normpi{P_\epsilon}<\infty$ may seem difficult to verify. However, the following proposition shows us that it is satisfied for $P_\epsilon$ constructed based on the Metropolis--Hastings algorithm with suitable \emph{jump kernels}. As long as the jump kernel, $J$, has $\normpi{J}<\infty$ then it will be satisfied. Therefore, this assumption is not excessively restrictive for MCMC applications.
  The jump kernel, $J$, describes the conditional distribution of a new point in the chain proposed from $x$ given that the proposal is accepted, and is related to the proposal kernel, $Q$, by $\alpha(x) J(x,A) = \int_A a(x,y) Q(x, dy) $ where $a(x,y)$ is the Metropolis--Hastings acceptance ratio and $\alpha(x) = \int_\Xx a(x,y) Q(x, dy)$ is the implied local jump-intensity.

	\begin{proposition}
		\label{prop:mh-perturb-endomorphism}
		If $P_\epsilon(x,\cdot) = (1-\alpha(x))\delta_x +\alpha(x)J(x,\cdot)$ with $\alpha:\Xx\to[0,1]$ measurable, and $J : L_2(\pi)\to L_2(\pi)$ and $\normpi{J}<\infty$, then
    \[\normpi{P_\epsilon} \leq 1 +\normpi{J}.\]

	\end{proposition}

	\begin{proof}
		[Proof of \cref{prop:mh-perturb-endomorphism}]
    Consider the operator $A$ on $L_2(\pi)$ given by $[\nu A](C) = \int_C \alpha(x)\nu(dx)$ for all measurable sets $C$.
    Its adjoint, $A'$, is given by $[A' f] (x) = \alpha(x) f(x)$ for all $x \in \Xx$ and $f\in L_2'(\pi)$.
    Since $\alpha : \Xx \to [0,1]$, then
    $A':L_2'(\pi)\to L_2'(\pi)$ with $\normpiD{A'} \leq 1$.
    Thus $A : L_2(\pi)\to L_2(\pi)$ with $\normpi{A} \leq 1 $. The same also holds for $I-A$.
    Now, $P_\epsilon = A + (I-A) J$, so $\normpi{P_\epsilon} \leq 1 +\normpi{J}.$
	\end{proof}
	Verifying that $\normpi{P-P_\epsilon}$ is finite, and sufficiently small will be the main analytic burden faced when trying to apply our results to more general settings. The development of further tools to determine whether $\normpi{P-P_\epsilon}$ is finite and to bound it quantitatively would be an interesting line of future research.

\subsection{Convergence Rates and Closeness of Stationary Distributions}

\begin{theorem}[Geometric ergodicity of the perturbed chain and closeness of the stationary distributions in \emph{original norm}, $L_2(\pi)$]
\label{combinedthm}%
Under the assumptions of \cref{sec-assump},
if in addition $\epsilon<\alpha$, then $\pi_\epsilon\in L_2(\pi)$,
\*[0\leq\normpi{\pi-\pi_\epsilon} \leq
\frac{\epsilon}{\sqrt{\alpha^2-\epsilon^2}} \ , \]
$P_\epsilon$ is $L_2(\pi)$-geometrically
ergodic with factor $1-(\alpha-\epsilon)$, and for any initial probability measure $\mu\in L_2(\pi)$
\*[
	\normpi{ \mu P_\epsilon^n -\pi}
		& \leq (1-(\alpha-\epsilon))^n \normpi{\mu-\pi_\epsilon}  + \frac{\epsilon}{\sqrt{\alpha^2-\epsilon^2}} \ ,
\]
\end{theorem}
The proof of this result is the content of \cref{ax:proof-combinedthm}.
We follow the derivation in \citep{johndrow2015approximations} with minimal structural modification, though the technicalities must be handled differently and additional theoretical machinery is required.
We use the fact that the existence of a spectral gap for the restriction of $P$ to $L_{2,0}(\pi)$ yields an inequality of the same form as uniform contractivity condition, but in the $L_{2}(\pi)$-norm as opposed to the total variation norm (cf.\ Theorem~2.1 of \citet{roberts1997geometric}).

\begin{remark}
	Bounds on the differences between measures in $L_2(\pi)$-norm can be converted into bounds on the total variation distance since, by Cauchy-Schwarz, for any measure $\lambda$ and any signed measure $\nu \in L_2(\lambda)$ we have $\normTV{\nu} = \frac{1}{2}\norm{\nu}_{L_1(\lambda)} \leq \frac{1}{2}\norm{\nu}_{L_2(\lambda)}$. Thus, for example, under the assumptions of \cref{combinedthm},
	\*[
		\normTV{ \mu P_\epsilon^n -\pi}\leq \frac{1}{2}\sbra{(1-(\alpha-\epsilon))^n \normpi{\mu-\pi_\epsilon}  + \frac{\epsilon}{\sqrt{\alpha^2-\epsilon^2}}}.
	\]
	Similarly, under the assumptions of \cref{combinedthm}, we find that $P_\epsilon$ is $(L_2(\pi),\normTV{\cdot})$-GE with factor $1-(\alpha-\epsilon)$ (see \cref{def:mixed-GE} below).
\end{remark}

In some situations, such as the computation of mean-squared errors in \cref{mse_perturb}, it may be inconvenient or impossible to use to use the $L_2(\pi)$ norm when studying some aspects of $P_\epsilon$. The next theorem will allow us to ``switch'' to other norms which may be more natural for a given task. First, however, we need to introduce one more notion of geometric ergodicity.

\begin{definition}[$(V,\Norm{\cdot})$-Geometric Ergodicity]\label{def:mixed-GE}
Let $P$ be the kernel of a positive recurrent Markov chain with invariant measure $\pi$. Let $V$ be a vector space of signed measures on $(\Xx,\Sigma)$ containing $\pi$, and let $\Norm{\cdot}$ be a norm on $V$ (for which $V$ may not be complete).

$P$ is \textbf{$(V,\Norm{\cdot})$-geometrically ergodic with factor $\rho$} if there exists $C: V\cap \Mm_{+,1}\to\PosReals$ such that for every $\nu\in V\cap \Mm_{+,1}$ and for all $n\in\Nats$:
  \*[
  	\Norm{\nu P^n - \pi}\leq C(\nu) \rho^n \ .
  \]
	The \emph{optimal rate} for $(V,\Norm{\cdot})$-geometric ergodicity is the infimum over factors for which the above definition holds;
	\*[
		\rho^\star = \inf\lcr\{{\rho>0: \exists C:V\cap \Mm_{+,1}\to\PosReals \st \forall n\in\Nats, \nu\in V\cap \Mm_{+,1} \quad \Norm{\nu P^n - \pi} \leq C(\nu) \rho^n}\} \ .
	\]
\end{definition}
We will be interested in this definition for the cases that $V = L_\infty(\pi)$ and $\Norm{\cdot}$ is either $\normpi{\cdot}$ or $\normpiO{\cdot}$.

\begin{remark}[Relationships between $(L_\infty(\lambda),\norm{\cdot}_{L_p(\lambda)})$-GE, a.e.-TV-GE,  and $L_2(\lambda)$-GE]
Clearly if $P$ is $L_2(\lambda)$-GE with factor $\rho_2$ then it is also $(L_\infty(\lambda),\norm{\cdot}_{L_2(\lambda)})$-GE with factor $\rho_2$.
Conversely \citet{roberts2001geometric} show that if $P$ is $(L_\infty(\pi),\norm{\cdot}_{L_2(\pi)})$-GE with factor $\rho_2$ then it is also a.e.-TV-GE with some factor $\rho_\tv\in(0,1)$.
However the factor for a.e.-TV-GE may in fact be worse than the factor of $(L_\infty(\pi),\norm{\cdot}_{L_2(\pi)})$-GE or $(L_\infty(\pi),\norm{\cdot}_{L_1(\pi)})$-GE.
\citet{baxendale2005renewal} gives a detailed exposition on the barriers to the comparison of factors for geometric ergodicity given by different equivalent definitions.

In \cref{ax:weak-not-strong-example} we give an example where the optimal rates for $L_2(\pi)$-GE and $(L_\infty(\pi),\norm{\cdot}_{L_2(\pi)})$-GE are distinct when $P$ is not reversible.
If $P$ is $\pi$-reversible then the factors for $L_2(\pi)$-GE, $(L_\infty(\pi),\norm{\cdot}_{L_2(\pi)})$-GE, and $(L_\infty(\pi),\norm{\cdot}_{L_1(\pi)})$-GE must be the same. This result combines a comment and Theorem 3 of \citep{roberts2001geometric}, both stated but not proved.
The formal statement of that result and its proof may be found in \cref{ax:pf:lem:rt-comment}.

Finally, note that by definition $L_2(\pi)$-GE is equivalent to $(L_2(\pi),\normpi{\cdot})$ with the same coefficient functions and factors, and that a.e.-TV-GE is equivalent to $(D,\normTV{\cdot})$-GE where we can take $D = \text{span}\rbra{\set{\pi}\union\set{\delta_x: x\in \Xx\setminus{N}, r\in\Reals}}$ for some $\pi$-null set $N$. The null set, $N$, can be taken to be the same for all factors $\rho$ by taking the union over the null sets for factors $\rho\in\Rationals$ (since a countable union of null sets is still null).
\end{remark}

\begin{lemma}[Characterization of optimal rates for $(V,\Norm{\cdot})$-GE chains]
	\label{lem:opt-rate-char}
	If $P$ is $(V,\Norm{\cdot})$-GE with stationary measure $\pi$ then the optimal rate for $(V,\Norm{\cdot})$-GE is equal to
	\[
		\sup_{\mu\in V\cap \Mm_{+,1}} \limsup_{n\to\infty} \Norm{\mu P^n -\pi}^{1/n} \ .
	\]
\end{lemma}
The proof of this result is found in \cref{ax:pf:lem:rt-comment}.

\begin{remark}
	The quantity $\limsup_{n\to\infty} \Norm{\mu P^n -\pi}^{1/n}$ is the \emph{local spectral radius} of $P-\Pi$ at $\mu$ with respect to $\Norm{\cdot}$, where $\Pi$ is the rank-1 kernel defined by $\Pi(x,A) = \pi(A)$ for all $x\in\Xx$ and $A\in \Sigma$.
\end{remark}

\begin{lemma}[$L_2(\pi)$-GE, $(L_\infty(\pi), \norm{\cdot}_{L_2(\pi)})$-GE, and $(L_\infty(\pi), \norm{\cdot}_{L_1(\pi)})$-GE are equivalent for $\pi$-reversible chains, with equal optimal rates.]\label{lem:rt-comment}
    Let $\rho\in[0,1)$. The following are equivalent for a $\pi$-reversible Markov Chain $P$:
    \begin{enumerate}
      \item[(i)] $P$ is $(L_\infty(\pi), \norm{\cdot}_{L_1(\pi)})$-geometrically ergodic with optimal rate $\rho$,
      \item[(ii)] $P$ is $(L_\infty(\pi), \norm{\cdot}_{L_2(\pi)})$-geometrically ergodic with optimal rate $\rho$,
      \item[(iii)] $P$ is $L_2(\pi)$-geometrically ergodic with optimal rate $\rho$,
      \item[(iv)] The spectral radius of $P\restrict{L_{2,0}(\pi)}$ is equal to $\rho$.
    \end{enumerate}
\end{lemma}
\begin{remark}
  Since either of \textit{(iii)} or \textit{(iv)} are equivalent to all the conditions listed in \citet[Theorem 2.1]{roberts1997geometric}, indeed all of the items listed above are equivalent to all the items listed in their result. We only included \textit{(iii)} and \textit{(iv)} here for brevity, and since they are the ones most relevant to the present paper. Moreover, all of these conditions are implied by any of the equivalent conditions for $\pi$-a.e.-TV-GE in \citet[Proposition 2.1]{roberts1997geometric} (though with possibly different optimal rates for each condition therein).
\end{remark}
The proof of this result is found in \cref{ax:pf:lem:rt-comment}.

\cref{combinedthm} controls the convergence of the perturbed chain $P_\epsilon$ in terms of the ``original'' norm (from $L_2(\pi)$).
We also demonstrate that $P_\epsilon$ is geometrically ergodic in the $L_2(\pi_\epsilon)$ norm, as this would also allow us to use the equivalences in \citep{roberts1997geometric}.
The following two results allow us to transfer the geometric ergodicity of $P_\epsilon$ in $L_2(\pi)$ to other notions of geometric ergodicity.
\cref{l1l2geomthm-rev} handles the case that the perturbed kernel is reversible, while \cref{l1l2geomthm} handles both that the perturbed kernel is reversible or non-reversible.

\begin{theorem}[Geometric ergodicity of the perturbed chain in the \emph{other norms}; $L_1(\pi_\epsilon)$, $L_2(\pi_\epsilon)$, total variation]
\label{l1l2geomthm}%
Under the assumptions of \cref{sec-assump}, if $\epsilon<\alpha$, then:
\begin{itemize}
\item[(i)] $P_\epsilon$ is a.e.-TV-geometrically ergodic with some factor $\rho_\tv \in (0,1)$, and
\item[(ii)] $P_\epsilon$ is $(L_\infty(\pi_\epsilon),\norm{\cdot}_{L_1(\pi_\epsilon)})$-GE with factor $\rho_1=(1-(\alpha-\epsilon))$ and $C_1(\mu) = \normpi{\mu-\pi_\epsilon}$, and
\item[(iii)] If $\pi\in L_\infty(\pi_\epsilon)$ then $P_\epsilon$ is $L_2(\pi_\epsilon)$-GE with factor $\rho_2=(1-(\alpha-\epsilon))$ and \*[C_2(\mu) = \norm{\pi}_{L_\infty(\pi_\epsilon)}^{1/2}\normpi{\mu-\pi} \ .\]
\end{itemize}
\end{theorem}
The proof of this result is found in \cref{ax:proofs-l1l2-margerr}.

\begin{example}
	For example, consider perturbations of a Gaussian $\text{AR}(1)$ process. Let $Z_i\stk\sim{iid} \Nn(0,\sigma^2)$ and let $W_i\stk\sim{iid} \mu$.
	Take
	\[
	    X_{t+1} | X_t
	        & = (1-\alpha) X_t + Z_{t+1}\\
	    X_{t+1}^\epsilon|X_t^\epsilon
	        & = (1-\alpha) X_t^\epsilon + W_{t+1} .
	\]
	Then the original chain, $\set{X_t}_{t\in\Nats}$ is not uniformly ergodic, but it is geometrically ergodic. Hence, the results of \citep{alquier2016noisy,johndrow2015approximations} do not apply. The stationary measure of the exact chain is
	$\pi\equiv \Nn(0,\frac{\sigma^2}{\alpha(2-\alpha)})$, it is reversible, and the rate of geometric ergodicity is $(1-\alpha)$.
	Note that the perturbed chain, which we will call a $\mu$-$\text{AR}(1)$ process, may not be reversible and whether it is geometrically ergodic generally depends on the distribution $\mu$.

	Now, letting $\phi_{\sigma^2}$ be the $\Nn(0,\sigma^2)$ density, for any $\mu$ with $\rnderiv{\mu}{\phi_{\sigma^2}}\in [1-\epsilon,1+\epsilon]$, ,
	\[
	    \normpi{P - P_\epsilon}^2
	        & = \int_{-\infty}^\infty \int_{-\infty}^\infty  \rbra{ \frac{\mu(y-(1-\alpha)x)}{\pi(y)} - \frac{\phi_{\sigma^2}(y-(1-\alpha)x)}{\pi(y)}}^2 \pi(y)dy\ \pi(x)dx \\
	        & \leq \int_{-\infty}^\infty\int_{-\infty}^\infty  \epsilon^2 \rbra{\frac{\phi_{\sigma^2}(y-(1-\alpha)x)}{\pi(y)} dy}^2 \pi(y)dy\ \pi(x)dx \\
	        & = \epsilon^2 \normpi{P}\\
	        & = \epsilon^2
	\]
	Therefore, when $\epsilon<\alpha$ we can extend the geometric ergodicity of the Gaussian AR process to the $\mu-\text{AR}(1)$ process using \cref{l1l2geomthm}.
	We can also bound the discrepancy of the stationary measure of the perturbed chain from that $\Nn(0,\frac{\sigma^2}{\alpha(2-\alpha)})$ using \cref{combinedthm}.
	The subsequent results, \cref{cor:marg-error-bd,perturberrorthm} of this section may also be applied to this example to bound the discrepancy between the marginal distributions of the $\mu$-$\text{AR}(1)$ from a $\Nn(0,\frac{\sigma^2}{\alpha(2-\alpha)})$ at any time, as well as the approximation error of the time-averaged law of the $\mu$-$\text{AR}(1)$ from $\Nn(0,\frac{\sigma^2}{\alpha(2-\alpha)})$ .
\end{example}

\begin{theorem}[$L_2(\pi_\epsilon)$-Geometric ergodicity of the perturbed chain, reversible case]
\label{l1l2geomthm-rev}%
Under the assumptions of \cref{sec-assump}, if $\epsilon<\alpha$, and $P_\epsilon$ is $\pi_\epsilon$-reversible, then $P_\epsilon$ is $L_2(\pi_\epsilon)$-GE with factor $\rho_2 = (1 - \alpha + \epsilon)$ and coefficient function $C(\nu) = \normpiE{\nu}$.
\end{theorem}
The proof of this result is found in \cref{ax:proofs-l1l2-margerr}.

\begin{corollary}[Closeness of stationary distributions in $L_2(\pi_\epsilon)$]
\label{cor:marg-error-bd}
If $\epsilon < \alpha$, and $\norm{P-P_\epsilon}_{L_2(\pi_\epsilon)} \leq \epseps$  then
\begin{itemize}
\item[(i)] if $P_\epsilon$ is $\pi_\epsilon$ reversible, and if $\epseps<\alpha-\epsilon$ then
\*[
	\normpiE{\pi-\pi_\epsilon}
	\leq \frac{\epseps}{\sqrt{(\alpha-\epsilon)^2-\epseps^2}} \ ,
\]
and for any $\mu\in L_2(\pi_\epsilon)$
\*[
	\normpiE{ \mu P_\epsilon^n -\pi}
		& \leq (1-(\alpha-\epsilon))^n \normpiE{\mu-\pi_\epsilon}  + \frac{\epseps}{\sqrt{(\alpha-\epsilon)^2-\epseps^2}} \ ,
\]
\item[(ii)] if $\pi\in L_\infty(\pi_\epsilon)$ and $\epseps<1$, then
\*[
	\normpiE{\pi-\pi_\epsilon}
		\leq \frac{\epseps + \norm{\pi}_{L_\infty(\pi_\epsilon)}^{1/2}\frac{\epsilon}{\sqrt{\alpha^2-\epsilon^2}}(1-(\alpha-\epsilon))}{1-\epseps} \ ,
\]
and for any $\mu\in L_\infty(\pi_\epsilon)$
\*[
	\normpiE{ \mu P_\epsilon^n -\pi}
		& \leq (1-(\alpha-\epsilon))^n \norm{\pi}_{L_\infty(\pi_\epsilon)}^{1/2} \normpiE{\mu-\pi_\epsilon}\\
		&\qquad  + 	\frac{\epseps + \norm{\pi}_{L_\infty(\pi_\epsilon)}^{1/2}\frac{\epsilon}{\sqrt{\alpha^2-\epsilon^2}}(1-(\alpha-\epsilon))}{1-\epseps} \ ,
\]
\end{itemize}
\end{corollary}
The proof of this result is found in \cref{ax:proofs-l1l2-margerr}.
We turn our attention to bounds on the error of estimation
measures of the form $\frac{1}{t} \sum_{k=0}^{t-1} \mu P^k$, and
estimates of the form $\frac{1}{t} \sum_{k=0}^{t-1} f(X_k)$.
Firstly, when computing Monte Carlo estimates, the bias is controlled by a time-averaged marginal distribution of the form $\frac{1}{t}\sum_{k=0}^{t-1} \mu P_\epsilon^k $. This leads us to the following result.

\begin{theorem}[Convergence of Time-Averaged Marginal Distributions]
\label{perturberrorthm}%
Under the assumptions of \cref{sec-assump},
suppose $\epsilon<\alpha$ and $\pi_\epsilon\in L_2(\pi)$. Then for any
probability distribution $\mu\in L_2(\pi)$,
\*[
\normpi{\pi -\frac{1}{t}\sum_{k=0}^{t-1} \mu P_\epsilon^k}
    & \leq \frac{1-(1-(\alpha-\epsilon))^t}{t(\alpha-\epsilon)}\normpi{
    \pi_\epsilon-\mu}
    +\frac{\epsilon}{\sqrt{\alpha^2-\epsilon^2}}
\]

If additionally, $\norm{P-P_\epsilon}_{L_2(\pi_\epsilon)} \leq \epseps$  then
\begin{itemize}
\item[(i)] if  $P_\epsilon$ is $\pi_\epsilon$-reversible, and $\epseps<\alpha-\epsilon$ then
\*[
\normpiE{\pi -\frac{1}{t}\sum_{k=0}^{t-1} \mu P_\epsilon^k}
    &\leq \frac{1-(1-(\alpha-\epsilon))^t}{t(\alpha-\epsilon)}\normpiE{
    \pi_\epsilon-\mu}
    +\frac{\epseps}{\sqrt{(\alpha-\epsilon)^2-\epseps^2}}
\]

\item[(ii)] if $\pi\in L_\infty(\pi_\epsilon)$ and $\epseps<1$, and if $\mu\in L_\infty(\pi_\epsilon)$ then
\*[
\normpiE{\pi -\frac{1}{t}\sum_{k=0}^{t-1} \mu P_\epsilon^k}
    &\leq \frac{1-(1-(\alpha-\epsilon))^t}{t(\alpha-\epsilon)}\norm{\pi}_{L_\infty(\pi_\epsilon)}^{1/2}\normpiE{
    \pi_\epsilon-\mu}\\
    &\qquad +\frac{\epseps + \norm{\pi}_{L_\infty(\pi_\epsilon)}^{1/2}\frac{\epsilon}{\sqrt{\alpha^2-\epsilon^2}}(1-(\alpha-\epsilon))}{1-\epseps}
\]
\end{itemize}
\end{theorem}
The proof of this result is found in \cref{ax:time-avg-proofs}.
Relative to the uniform closeness of kernels (in total variation) required \citep{johndrow2015approximations}, our assumption that the approximating kernel is close in the operator norm induced by $L_2(\pi)$ is non-comparable. This is because our bound is in terms of the $L_2$ distance which always upper-bounds the total variation distance (up to a constant factor of $1/2$), but our assumption also does not require spatial uniformity which \citep{johndrow2015approximations}'s does.
Thus, this paper's assumptions are not weaker nor stronger than those in \citep{johndrow2015approximations}. Comparing the above results to the corresponding $L_1$ result of \citep{johndrow2015approximations}, we see that the transient phase bias part of our $L_2$ bounds differ from their $L_1$ transient phase bias bound only by a factor which is constant in time, but varies with the initial distribution (as is to be expected when moving from uniform ergodicity to geometric ergodicity).

\subsection{Mean Squared Error Bounds for Monte Carlo Estimates}
Suppose that $\rbra{X^\epsilon_k}_{k\in\NatsO}$ is a realization of the Markov chain with transition kernel $P_\epsilon$ and initial distribution $\mu$.
\newcommand{\MSE}[4]{\text{MSE}^{#1}_{#2}\rbra{#3,#4}}
The mean squared error of a Monte Carlo estimate of $\pi f$ made using $\rbra{X^\epsilon_k}_{k\leq t}$ is given by
\[
	\MSE{\epsilon}{t}{\mu}{f}
		& = \mathbb{E}\left[\left(\pi(f) - \frac{1}{t}\sum_{k=0}^{t-1} f(X^\epsilon_k)\right)^2\right]
\]

\begin{theorem}[Mean Squared Error of Monte Carlo Estimates from the Perturbed Chain]
\label{mse_perturb}
Under the assumptions of \cref{sec-assump}, if $\epsilon<\alpha$, $X^\epsilon_0\sim\mu$, $P_\epsilon$ is $\pi_\epsilon$-reversible, and $\rho_2 = (1-(\alpha-\epsilon))$
then for $f\in L'_4(\pi_\epsilon)$
\begin{itemize}
	\item[(i)] if $f\in L_2'(\pi)$ as well, then
	\*[
	\MSE{\epsilon}{t}{\mu}{f}
			&\leq \frac{2\normpiDE{ f - \pi_\epsilon f}^2}{(1-\rho_2) t}
							+ \frac{2^{7/2} \normpiE{ \mu -\pi_\epsilon} \norm{f -\pi_\epsilon f }_{L_4'(\pi_\epsilon)}^2}{(1-\rho_2)^2 t^2} \\
					& \qquad + \normpiD{f-\pi_\epsilon f}^2\lcr({\frac{\epsilon^2}{\alpha^2 - \epsilon^2}  + 2\frac{\epsilon}{\sqrt{\alpha^2 - \epsilon^2}}\frac{1}{t(\alpha-\epsilon)}\normpi{\pi_\epsilon-\mu}})
	\]
	and
	\*[
	\MSE{\epsilon}{t}{\mu}{f}
			&\leq \frac{4\normpiDE{ f - \pi_\epsilon f}^2}{(1-\rho_2) t}
						+ \frac{2^{9/2} \normpiE{ \mu -\pi_\epsilon} \norm{f -\pi_\epsilon f }_{L_4'(\pi_\epsilon)}^2}{(1-\rho_2)^2 t^2} \\
					&\qquad+ 2\normpiD{f-\pi_\epsilon f}^2\frac{\epsilon^2}{\alpha^2 - \epsilon^2} \ ,
	\]
	and
\item[(ii)] if $\normpiE{P-P_\epsilon}\leq \epseps < (1-\rho_2)$, then
\*[
\MSE{\epsilon}{t}{\mu}{f}
		&\leq \frac{2^{7/2} \normpiE{ \mu -\pi_\epsilon} \norm{f -\pi_\epsilon f }_{L_4'(\pi_\epsilon)}^2}{(1-\rho_2)^2 t^2} \\
				& \qquad + \lcr.{\normpiDE{f-\pi_\epsilon f}^2}.\lcr({\frac{\epseps^2}{(1-\rho_2)^2 - \epseps^2}+ 2\frac{1+\frac{\epseps}{\sqrt{(1-\rho_2)^2 - \epseps^2}}}  {t(1-\rho_2)}\normpiE{
								\pi_\epsilon-\mu}}),
\]
and
\*[
\MSE{\epsilon}{t}{\mu}{f}
		&\leq \frac{2^{9/2} \normpiE{ \mu -\pi_\epsilon} \norm{f -\pi_\epsilon f }_{L_4'(\pi_\epsilon)}^2}{(1-\rho_2)^2 t^2} \\
				& \qquad + \lcr.{\normpiDE{f-\pi_\epsilon f}^2}.\lcr({\frac{2\epseps^2}{(1-\rho_2)^2 - \epseps^2}+ \frac{4}  {t(1-\rho_2)}\normpiE{
								\pi_\epsilon-\mu}})
\]
\end{itemize}

\end{theorem}
The proof of this result is found in \cref{ax:mse-proofs}.
Perturbation bounds based upon drift and minorization conditions could provide similar MSE bounds for functions in $L_2(\pi_\epsilon)$ with $\sup_{x\in\Xx} \frac{\abs{f}}{\sqrt{V}} <\infty$ (where $V$ is the function appearing in the drift condition), as in the work of \citet{johndrow2017error}.
While that may be a larger class of functions than $L_4'(\pi_\epsilon)$ (depending on what $V$ happens to be), the class $L_4'(\pi_\epsilon)$ is quite rich making this bound still useful.
Moreover, the class of functions to which our MSE bounds apply, and the value of the bound itself, depend only on intrinsic features of the Markov chains under consideration. In contrast bounds based on drift and minorization conditions include extrinsic features---introduced by the user for analytic purposes (such as the drift function, $V$)---of which many choices might exist; each leading to different function classes and different bounds.

\section{Applications Markov Chain Monte Carlo}
\label{sec-noisy}
In this section we apply our theoretical results to some specific variants of Markov Chain Monte Carlo (MCMC) algorithms to obtain guarantees for noisy and/or approximate variants of MCMC algorithms.
MCMC is used to generate (correlated) samples approximately from a target distribution for which the (unnormalized) density can be evaluated. The key insight is to construct a (typically reversible) Markov chain for which the stationary distribution is the target distribution. This is possible since the reversibility condition is readily verified locally (without integration).

The most commonly used family of MCMC methods is the Metropolis--Hastings algorithm (MH). The chain is initialized from some distribution $X_0\sim \mu_0$. At each step a \emph{proposal} is drawn from some transition kernel, $Y_t \sim Q(X_{t-1},\cdot)$. Suppose that the kernel $Q(x,\cdot)$ has density $q(\cdot\vert x)$. The proposal is \emph{accepted} with probability $a(Y_t|X_{t-1}) =\min\rbra{1,\frac{\pi(Y_t)
q(X_{t-1}\vert Y_t)}{\pi(X_{t-1}) q(Y_t \vert X_{t-1})}}$. If the proposal is accepted then $X_t = Y_t$, and if it is \emph{rejected} (not accepted) then $X_t = X_{t-1}$. The combination of proposal and accept/reject steps yields a $\pi$-reversible Markov kernel, and reversibility guarantees that the stationary distribution is the target distribution.  The user has freedom in selecting the proposal kernel, $Q$, and some choice lead to better performance than others. The accept/reject step requires evaluating the target density, $\pi$, twice on each step.

A large body of research exists guaranteeing that specific MCMC algorithms will be geometrically ergodic (see for example \citep{livingstone2019geometric,hobert1998geometric,roberts2004general}, and many more.). These typically verify geometric ergodicity for a collection of target distributions, $\pi$, and for a small family of proposal kernels, $Q$.

If the target likelihood involves some integral which is computed numerically or by simple Monte Carlo then the numerical and/or stochastic approximation introduces a \emph{perturbation} to the idealized MCMC scheme. This occurs even in standard and widely used statistical models such as generalized linear mixed effect models (GLMMs), since the random effects are nuisance variables which need to be integrated away, either using Laplace or Gaussian quadrature schemes, or by simple Monte Carlo, in order to evaluate the likelihood. Since the Metropolis--Hastings algorithm requires evaluation of the density, these each introduce a perturbation in the acceptance ratio, and hence in the actual transition kernel of the MH scheme.
We now consider the
extent to which our results from Section 3 can be applied to prove
geometric ergodicity for certain approximate MCMC algorithms.

\subsection{Noisy and Approximate MCMC}
The noisy (or approximate) Metropolis--Hastings algorithm (nMH), as found in
\citet{alquier2016noisy} (see also \citet{medina2016stability}) was briefly described above.
The algorithm is defined exactly the same way as the Metropolis--Hastings algorithm, except that the \emph{acceptance ratio}, $a(Y_t|X_{t-1})$, is replaced by a (possibly stochastic) approximation $\widehat a(Y_t|X_{t-1}, Z_t)$.
Here $Z_t$ denotes some random element providing an additional source of randomness, so that $a(Y_t|X_{t-1}, Z_t)$ is not $\sigma(Y_t, X_{t-1})$-measurable when the approximation $\widehat a(Y_t|X_{t-1}, Z_t)$ is stochastic. In the case of a deterministic approximation, $Z_t$ can be ignored or treated as a constant. The approximation can typically be though of as replacing the target density in the acceptance ratio with some approximation.
This includes most approximate MCMC algorithms which preserve the state space and the Markov property, such as replacing $\pi$ with a deterministic approximation or and independent stochastic approximation at each step (as in Monte Carlo within Metropolis).
It does not include algorithms which retain the Markov property only an augmented state space, such as the Pseudo-Marginal approach of \citet{andrieu2009pseudo}.

For our analysis of these algorithms, $P$ will represent the transition
kernel for the MH algorithm while $\widehat P$ will represent the
kernel for the corresponding nMH chain.
The key step in applying our results from \cref{sec-perturbationbounds} will be to show the $L_2(\pi)$ closeness of the nMH transition kernel to the MH transition kernel.
Again, $\normpi{\cdot}$ is the norm on $L_2(\pi)$ and the corresponding operator norm.
We will assume that $\pi$ and $\{Q(x,\cdot)\}_{x\in \Xx}$ are all absolutely
continuous with respect to the Lebesgue measure and have densities
$\pi$ and $\set{q(\cdot\vert x)}_{x\in\Xx}$ respectively.
All arguments used would still apply if there were an arbitrary dominating measure in place of the Lebesgue measure.
Let $F_{y\vert x}$ be the regular conditional distribution for $Z$ given $X=x$ and $Y=y$, and let $f_{y\vert x}$ be its Lebesgue density.
Define the following \emph{perturbation function} for the nMH algorithm as
\*[
r(y|x)
    &= \EEE{Z\sim F_{y\vert x}} \left(
    {{a}}(y|x)-{\widehat{a}}(y|x,Z)\right)
    = \int  \left( {{a}}(y|x)-{\widehat{a}}(y|x,z)\right)
    f_{y\vert x}(z) dz
\]

\begin{theorem}[Geometric ergodicity and closeness of stationary distributions noisy or approximate Metropolis--Hastings]
\label{noisyoperatorthm}
Let $P$ be the transition kernel for a Metropolis--Hastings algorithm with proposal distribution $Q$, target distribution $\pi$, and acceptance ratio $a(\cdot\vert \cdot)$.
Let $\widehat P$ be the transition kernel for a corresponding noisy Metropolis--Hastings algorithm with approximate/noisy acceptance ratio $\widehat a(\cdot\vert \cdot, \cdot)$. Let $r(\cdot\vert \cdot)$ be the corresponding perturbation function.

If $\normpi{Q}<\infty$ and $\sup_{x,y} \abs{r(y|x)}\leq R$ then
\[\normpi{ \widehat P - P}\leq R (1+\normpi{Q})\ .\]
Furthermore, if $P$ is reversible and $L_2(\pi)$-geometrically ergodic with geometric contraction factor $(1-\alpha)$, and $\epsilon = R (1+\normpi{Q}) <\alpha$, then $\widehat P$ has a stationary distribution, $\widehat\pi$
and the assumptions outlined in \cref{sec-assump} hold with $P_\epsilon = \widehat P$ and $\pi_\epsilon =\widehat\pi$.

Therefore, \cref{combinedthm,l1l2geomthm,l1l2geomthm-rev,cor:marg-error-bd,perturberrorthm,mse_perturb} can all be applied.
In particular, $\widehat P$ is $L_2(\pi)$-geometrically ergodic with factor $1-(\alpha-R (1+\normpi{Q}))$, it is a.e.-TV geometrically ergodic, and
\[
\normpi{\widehat \pi - \pi} \leq \frac{R (1+\normpi{Q})}{\sqrt{\alpha^2 - R^2 (1+\normpi{Q})^2}} \ ;
\]
and, if $\widehat P$ is reversible then it is $L_2(\widehat\pi)$ geometrically ergodic with factor $(1-(\alpha-R (1+\normpi{Q})))$.
\end{theorem}

The above theorem provides an alternative to the analogous
result of Corollary~2.3 from \citep{alquier2016noisy}, relaxing the
uniform ergodicity assumption.
In particular, it requires that $Q\in\mathcal{B}(L_2(\pi))$ and that
$R(1+\normpi{Q})<\alpha$. The first of these requirements is
not dramatically limiting since the user has control over the choice
of $Q$. The second of these requirements is also not
dramatically limiting as control over $R$ may be interpreted as
limiting the amount of noise in the nMH algorithm and such control is
required regardless in order to ensure the accuracy of approximation in
both the geometrically ergodic and uniformly ergodic cases.

\subsection{Application to Fixed Deterministic Approximations}
Suppose we run a fixed Metropolis--Hastings algorithm, but replace the target density with one which is close everywhere. Perhaps this alternative density is easier to compute (e.g. replacing an integral with a Laplace approximation as in \citet{kass1990validity}, or replacing a full sample with a coreset for sub-sampled Bayesian Inference as in \citet{campbell2019automated}). By construction we would know that the approximate target distribution is close to the ideal target distribution. The question still remains whether geometric ergodicity is preserved. We resolve this question in the case that the approximation has constant relative error.

\begin{corollary}
  \label{cor:deterministic-approx}
  Suppose we can approximate the unnormalized target density, $C\pi$, by $\widehat \pi$, with a $\theta$-bounded relative error;
  \[
    \sup_{x\in\Xx}\abs{\log\frac{C\pi(x)}{\widehat\pi(x)}} \leq \theta \ .
  \]

  Then if the Metropolis--Hastings algorithm with proposal kernel $Q$ is $L_2(\pi)$-geometrically ergodic with factor $(1-\alpha)$, and if $\theta < \frac{\alpha}{2(1+\normpi{Q})}$, then the corresponding approximate transition kernel, $\hat P$, is $L_2(\widehat \pi)$-geometrically ergodic and
  \[
  \normpi{\widehat \pi - \pi} \leq \frac{2\theta (1+\normpi{Q})}{\sqrt{\alpha^2 - 4\theta^2 (1+\normpi{Q})^2}} \ ;
  \]
\end{corollary}
\begin{proof}
  Since the function $x\mapsto 1\wedge \exp(x)$ is $1$-Lipschitz, we have:
  \[
    \abs{r(y\vert x)}
      & = \abs{a(y\vert x) - \widehat a(y\vert x)} \\
      & \leq \abs{\log\frac{\pi(y)q(x\vert y)}{\pi(x)q(y\vert x)} - \log\frac{\widehat \pi(y)q(x\vert y)}{\widehat\pi(x)q(y\vert x)} } \\
      & = \abs{\log\frac{C\pi(y)}{\widehat \pi(y)} - \log\frac{C\pi(x)}{\widehat\pi(x) }} \\
      & \leq 2 \theta
  \]
  So, $\widehat P$ will be $L_2(\pi)$-geometrically ergodic as long as $P$ was geometrically ergodic with some factor $0\leq(1-\alpha)<1$ and
  \[\label{eqn-smallrelerr}
    \theta < \frac{\alpha}{2(1+\normpi{Q})} \ .
  \]
  Moreover, in this case, $\widehat P$ is reversible. Thus, we can use \cref{l1l2geomthm-rev} to obtain $L_2(\widehat \pi)$-geometric ergodicity of $\widehat P$, with factor $1- \alpha + 2\theta(1+\normpi{Q})$.
\end{proof}
In this scenario, we can also use \cref{mse_perturb} to get quantitative bounds for the mean-squared error of any Monte Carlo estimates made using $\widehat P$, or any of our other results in \cref{combinedthm,l1l2geomthm,l1l2geomthm-rev,cor:marg-error-bd,perturberrorthm} as needed.

\begin{example}[Independence Sampler]
  The previous result also immediately gives that if $\rnderiv{\widehat \pi}{\pi}$ is bounded above by $C<\exp(1/4)$ and below by $c>\exp(-1/4)$ then the independence sampler for $\widehat \pi$ with proposals from $\pi$ is geometrically ergodic with factor $4\max(\log C, -\log(c))$.
  This is, however, sub-optimal when compared to \citet{smith1996exact} which only requires a finite upper bound on $\rnderiv{\widehat \pi}{\pi}$ to establish \emph{uniform ergodicity}.
\end{example}

\begin{example}[Laplace Approximation for GLMMs]
   Generalized linear mixed models (GLMMs) (see \citet{breslow1993approximate},\citet{mcculloch2005generalized}, etc.) are widely used in the modelling of non-normal response variables under repeated or correlated measurements. They are the natural common extension of generalized linear models and linear mixed effects models.  They handle dependence between observations by introducing Gaussian latent variables. These \emph{random effects} are nuisance variables for the purpose of inference. In order to perform Bayesian inference for GLMMs, one requires samples from the marginal posterior distribution of the parameters given the data. The marginal posterior, here, is the posterior for the parameters given the observations, in contrast to the joint posterior of the random effects and the parameters given the data.

   This can be approached in two ways. One option is to obtain samples for the random effects and parameters jointly given the data, and discard the random effects to get marginal posterior samples for the parameters. The second option is to approximate the likelihood by integrating (numerically) over the random effects, and using the resulting approximate likelihood in the calculations involving the unnormalized posterior for the parameters.

   In the second case, when the prior for the parameters is compactly supported, if one had established a result saying that a particular MH procedure for the exact posterior distribution of the parameters would be geometrically ergodic, then one could directly transfer this result to the approximate posterior computed using a Laplace approximation, at least for large enough samples. This is valid since the Laplace approximation has constant relative error on compact sets, and the relative error decreases with sample size (see \citet{tierney1986accurate}). Hence, for a large enough sample size \cref{eqn-smallrelerr} will be satisfied regardless of what the proposal kernel $Q$ was (as long as $\normpi{Q}$ was finite).
\end{example}

\begin{example}[Uniform Coresets]
  In Bayesian inference with large samples, an approach to reducing the computational burden of evaluating the likelihood in the unnormalized posterior for MCMC accept/reject steps is to select a representative subsample of the data and to up-weight the contributions of each of the selected samples in a way to best approximate the original likelihood. These up-weighted subsamples are called \emph{coresets}. They naturally give rise to approximate MCMC methods in which the true posterior is replaced by an approximation based upon a coreset. Several methods for coreset construction exist, however relatively little work has been done to assess their impact upon approximate MCMC methods. We will consider the \emph{uniform coreset} construction of \citet{huggins2016coresets} (as so named in \citep{campbell2019automated}).

  \citet[Theorem~3.2]{campbell2019automated} provides the guarantee that, with probability $(1-\delta)$, the unnormalized approximate posterior $\hat C \hat \pi$ based on a uniform coreset of size $M$ will satisfy
    \[
        \sup_{x\in \Xx}\frac{1}{\abs{\Ll(x)}}\abs{\log \frac{\hat C \hat \pi(x)}{C\pi(x)}}
            \leq \frac{\sigma }{\sqrt{M}} \rbra{\frac{3}{2}D +\ol \eta\sqrt{2\log\rbra{1/\delta}}}
    \]
    where $\sigma = \sum_{n=1}^N \sigma_n$, $N$ is the number of observations, $\sigma_n = \sup_{x\in \Xx}\abs{\frac{\Ll_i(x)}{\Ll(x)}}$, $\Ll_i(x)$ is the log-likelihood of parameter $x$ at the $i$th observation, $\Ll(x) = \sum_{i=1}^N \Ll_i(x)$ is the log-likelihood of the dataset
    \[
    \ol \eta = \max_{i,j\in\set{1,\dots,N}} \sup_{x\in \Xx}\frac{1}{\abs{\Ll(x)}}\abs{\frac{\Ll_i(x)}{\sigma_i}  - \frac{\Ll_j(x)}{\sigma_j}},
    \]
    and $D$ is the \emph{approximate dimension} of $\set{\Ll_i}_{i=1}^n$ (\cite[Definition~3.1]{campbell2019automated})

    If in addition to assuming that $\set{\sigma_i}_{i=1}^N$ are all finite as in \citep[Section~3]{campbell2019automated}, one were to assume that $\abs{\Ll(x)}$ is bounded as a function of $x$,
    then the uniform coreset result would imply the conditions of our \cref{cor:deterministic-approx}, namely that
      \[
        \sup_{x\in\Xx}\abs{\log\frac{C\pi(x)}{\widehat\pi(x)}}
          & \leq \frac{\sigma \norm{\Ll}_\infty}{\sqrt{M}} \rbra{\frac{3}{2}D +\ol \eta\sqrt{2\log\rbra{1/\delta}}},
        \]
    with high probability.
    Consequently, for any proposal kernel $Q:L_2(\pi)\to L_2(\pi)$ we should be able to choose $M$ sufficiently large so that with high probability
    \[
      \frac{\sigma \norm{\Ll}_\infty}{\sqrt{M}} \rbra{\frac{3}{2}D +\ol \eta\sqrt{2\log\rbra{1/\delta}}} < \frac{\alpha}{2(1+\normpi{Q})}.
    \]
    Hence the approximating Markov chain will by geometrically ergodic with high probability.
\end{example}

\subsection{Application to Monte Carlo Within Metropolis}
Following \citet{medina2019perturbation}, we can get bounds for the simple Monte Carlo within Metropolis algorithm (MCwM).
This is the special case of nMH where we approximate the likelihood ratio $\frac{\pi(y)}{\pi(x)} = \frac{\EE\Pi(y,Z)}{\EE\Pi(x,Z)}$ by $\widehat{\lcr({\frac{\pi(y)}{\pi(x)}})} = \frac{\sum_{i=1}^N \Pi(y,Z_{i})}{\sum_{i=N+1}^{2N} \Pi(x,Z_{i})}$
using a new independent sample taken each time the likelihood is evaluated. In the notation of the previous section,
\[
  \widehat a(y\vert x,z) = 1 \wedge \frac{q(x\vert y)\sum_{i=1}^N \Pi(y,z_{i})}{q(y\vert x\sum_{i=N+1}^{2N} \Pi(x,z_{i})}
\]

Let
\[
  W_k(x)
    & = \frac{1}{k \pi(x)} \sum_{i=1}^k \Pi(x,Z_{i}) \\
  i_{k}(x)^2
    & = \EE[W_k(x)^{-2}]\\
  s(x)
    & = \frac{1}{\sqrt{\pi(x)}} \sdev(\Pi(x,Z_1)) \\
\]
\citep[Lemma~14]{medina2019perturbation} tells us that if there is a $k\in \NN$ such that $i_{k}(x)<\infty$ for all $x\in\Xx$ then for $N\geq k$
\[
  \abs{r(y\vert x)}
    & \leq a(y\vert x) \frac{1}{\sqrt{N}} i_k(y)\lcr({s(x)+s(y)}) \\
    &\leq \frac{1}{\sqrt{N}} i_k(y)\lcr({s(x)+s(y)})
\]

\begin{corollary}
  Let $P$ be the Metropolis--Hastings transition kernel for the target density $\pi$ and proposal kernel $Q$.
  Let $\widehat P_N$ be the corresponding MCwM transition kernel when $\pi(\cdot)$ is approximated by $\frac{1}{N} \sum_{i=1}^N \Pi(\cdot,Z_{i})$.

  Assume that $s$ and $i_k$ as defined above are uniformly bounded for some $k\in\Nats$. Suppose further that $N_0= \max\lcr({k, \frac{4 \norm{i_k}_\infty^2 \norm{s}_\infty^2 (1+\normpi{Q})^2}{\alpha^2}})$, and $N\geq \floor{N_0}+1$.

  Then $\widehat P_N$ is reversible and $L_2(\pi)$-geometrically ergodic with factor $  1- \alpha + \frac{1}{\sqrt{N/N_0}}$, and has a stationary distribution, $\widehat \pi_N(x) \propto  \frac{\pi(x)}{N\ \EE \lcr[{\lcr({\sum_{i=1}^N \Pi(x,Z_i)})^{-1}}]}$ with
  \[
    \normpi{\pi - \widehat \pi_N}\leq \sqrt{\frac{N_0}{N\alpha^2 -N_0}}
  \]
\end{corollary}
\begin{proof}
Suppose that $N\geq \floor{N_0}+1$.
From \cref{combinedthm}, we know that the perturbed chain, $\widehat P_N$ is $L_2(\pi)$-geometrically ergodic with factor $  1- \alpha + \frac{1}{\sqrt{N/N_0}}$, has a stationary distribution, $\widehat\pi_N$ with
\[
  \normpi{\pi - \widehat \pi_N}\leq \sqrt{\frac{N_0}{N\alpha^2 -N_0}} \ .
\]
Moreover, by inspection, $\widehat P_N$ is reversibility with respect to $\widehat \pi_N(x) \propto  \frac{\pi(x)}{N\ \EE \lcr[{\lcr({\sum_{i=1}^N \Pi(x,Z_i)})^{-1}}]}$.
Thus, we can use \cref{l1l2geomthm-rev} to obtain $L_2(\widehat \pi_N)$-geometric ergodicity of $\widehat P_N$, with factor $1- \alpha + \frac{1}{\sqrt{N/N_0}}$.
\end{proof}

\begin{remark}
  A simple scenario under which these $i_k$ and $s$ are uniformly bounded is when the joint density of $x$ and $Z$ is bounded above an below by a multiple of the marginal of $x$, so that
    \[
        \frac{\Pi(x,z)}{\pi(x)} \in [c,C]
    \]
    for all $(x,z)\in\Xx\times\Zz$. This condition is essentially tight if we wish to take $k=1$ and the base measure to be the Lebesgue measure restricted to $U\subset\Reals^d$; in this case the condition $\norm{i_k(x)}_{L_\infty}<\infty$ implies that
    \[
        \int_{U} \frac{\pi(x)}{\Pi(x,z)} dz = \EE_{Z\sim\frac{\Pi(x,\cdot)}{\pi(x)}} \frac{\pi(x)^2}{\Pi(x,\cdot)^2} <\infty
    \]
    for all $x$. That is, the reciprocal of the conditional density of $Z$ given $X=x$ has a finite integral for each $x$.
\end{remark}
\begin{remark}
    More generally, \citep[Lemma~23]{medina2019perturbation} tells us that if $\EE[W_{k_0}(x)^{-p}]<\infty$ for some $k_0\in\Nats$ and $p>0$ then for $k\geq k_0 \ceil{\frac{2}{p}}$, $i_{k}(x)^2 < \EE[W_{k_0}(x)^{-p}]$.
    Therefore, in order to uniformly bound $i_k(x)$, it is sufficient to bound $\EE[W_{k_0}(x)^{-p}]$ uniformly in $x$ for some $k_0\in\Nats$, $p>0$. This is much less restrictive than trying to bound $i_1(x)$.
    In the case that $p<1, k_0=1$ this is much less restrictive then $p=2,k_0=1$; it is equivalent to requiring that tempered versions of conditional distribution $\frac{\Pi(x,\cdot)}{\pi(x)}$ can be normalized by uniformly bounded normalizing constants.
    This would be true, if for example $(Z| X=x)\sim\Nn(\mu(x),\sigma^2(x))$ with $\sigma^2(x)$ uniformly bounded in $x$. More generally, using $0<p<1$, instead of $p=2$ whenever the conditional law of $Z$ has uniform exp-poly tails, $\frac{\Pi(x,z)}{\pi(x)}\leq \exp( -C\abs{z-\mu(x)}^\alpha)$, with $\alpha>0$, the $p$-version of the condition would hold.

\end{remark}

We could also use \cref{mse_perturb} to get quantitative bounds for the mean-squared error of any Monte Carlo estimates made using $\widehat P_N$, or any of our other results in \cref{combinedthm,l1l2geomthm,l1l2geomthm-rev,cor:marg-error-bd,perturberrorthm} as needed.

In \citep{medina2019perturbation}, they also consider a case where the the assumption that $s$ and $i_k$ are uniformly bounded is dropped, and instead, the perturbed kernel is restricted to a bounded region. We do not address this case here.

\begin{paragraph}{Acknowledgements}
We thank Daniel Rudolf for very helpful comments on the first version of our preprint. We also thank Gareth O. Roberts, Peter Rosenthal, and Don Hadwin for helpful discussions.
\end{paragraph}

\printbibliography

\newpage
\appendix

\section{Proofs}
\label{ax:proofs}

\subsection{Proof of \cref{combinedthm}}
\label{ax:proof-combinedthm}

The following lemma is contained in the remark after Theorem~2.1 of
\citep{roberts1997geometric}; we prove it here as well since the proof is so simple.

\begin{lemma}[Remark in \citep{roberts1997geometric}]\label{normeqlemma}
For any probability measure $\mu\in L_2(\pi)$,
\*[
    \normpi{ \mu - \pi}^2 = \normpi{\mu}^2 -1
\]
\end{lemma}
\begin{proof}
\*[
0\leq\normpi{\mu-\pi}^2
    &=\int \lcr({\rnderiv\mu\pi -1})^2 \dee \pi
    =\int \lcr({\lcr({\rnderiv\mu\pi})^2 -2 \rnderiv\mu\pi +1 }) \dee\pi\\
    &=\int \lcr({\rnderiv\mu\pi})^2\dee\pi-2\int \dee\mu +\int \dee\pi
    =\normpi\mu^2-1
\]
\end{proof}

We will make use of the following simplified version of Theorem~2.1 from \citep{roberts1997geometric} as well:

\begin{proposition}[Equivalent definitions of $L_2(\pi)$ geometric ergodicity from \citep{roberts1997geometric}]
\label{rr97thm2}
For a reversible Markov chain with kernel $P$ and stationary distribution $\pi$ on state space $\Xx$, the following are equivalent (and $\rho$ is equal in both cases):
\begin{itemize}
\item[(i)] $P$ is $L_2(\pi)$-geometrically ergodic with optimal rate $\rho$ and coefficient function $C(\mu) = \normpi{\mu-\pi}$,
\item[(ii)] $P$ has $L_{2,0}(\pi)$-spectral radius and norm both equal to $\rho$;
\*[
	\sup_{\nu \in L_{2,0}(\pi)\setminus\set{0}}\frac{\normpi{\nu P}}{\normpi{\nu}} = \rho = r(P\restrict{L_{2,0}(\pi)}) \ ,
\]
Where
\[
r(P\restrict{L_{2,0}(\pi)}):=\sup\set{\abs{\rho}: \rho\in\CC \andT \rbra{P\restrict{L_{2,0}(\pi)}-\rho I_{L_{2,0}(\pi)}} \text{ is not invertible}}
\]
\end{itemize}
\end{proposition}
Note that while when the kernel is reversible we may take $C(\mu) = \normpi{\mu-\pi}$ in the bound corresponding $L_2(\pi)$-GE with optimal rate $\rho$, this is not true for non-reversible chains.
By applying the above theorem in our context we have:

\begin{lemma}
\label{normprop}%
Under the assumptions of \cref{sec-assump},
\*[
    \normpi{ \nu_1 P^n - \nu_2 P^n}\leq (1-\alpha)^n\normpi{\nu_1-\nu_2}
\]
for any probability distributions $\nu_1,\nu_2\in L_2(\pi)$.
In particular, taking $\nu_2=\pi$,
\*[
    \normpi{\nu_1 P^n - \pi} \leq (1-\alpha)^n \normpi{\nu_1-\pi}
=(1-\alpha)^n \sqrt{\normpi{ \nu_1} ^2-1}
\]
and applying Cauchy-Schwarz yields
\*[
    \normpiO{\nu_1 P^n - \pi}\leq \normpi{ \nu_1 P^n - \pi} \leq
(1-\alpha)^n \normpi{ \nu_1 - \pi}
\]
\end{lemma}

We begin with a first result giving sufficient conditions under which the
stationary distribution $\pi_\epsilon$ of the perturbed chain is
in $L_2(\pi)$:

\begin{lemma}
\label{lem:piep}
Under the assumptions of \cref{sec-assump},
if in addition $\epsilon<\alpha$, then $P_\epsilon$ has a unique stationary distribution, $\pi_\epsilon\in L_2(\pi)$, and $\normpi{\pi_\epsilon-\pi}\leq \frac{\epsilon}{\alpha-\epsilon}$.
\end{lemma}

\begin{proof}
Since $P_\epsilon$ is $\pi$-irreducible and aperiodic, it has at most one stationary distribution, $\pi_\epsilon$, with $\pi_\epsilon\ll \pi$ (see for example \citep[Corollary~9.2.16]{douc2018markov}).

Suppose for now that $\pi P_\epsilon^n$ has an $L_2(\pi)$ limit, $\pi_\epsilon$;
Then, using the triangle inequality, and the contraction property ($\normTV{P_\epsilon} = 1$), and Cauchy-Schwarz
\*[
	\normTV{\pi_\epsilon P_\epsilon - \pi_\epsilon}
		& \leq \normTV{\pi_\epsilon P_\epsilon - \pi P_\epsilon^{n}}
			+ \normTV{\pi P_\epsilon^{n} - \pi_\epsilon}\\
		& \leq \normTV{\pi_\epsilon - \pi P_\epsilon^{n-1}}
			+ \normTV{\pi P_\epsilon^{n} - \pi_\epsilon}\\
			& \leq \normpi{\pi_\epsilon - \pi P_\epsilon^{n-1}}
				+ \normpi{\pi P_\epsilon^{n} - \pi_\epsilon}
			\stkm\to{n\to\infty} 0
\]
we find that $\pi_\epsilon$ must be stationary for $P_\epsilon$.

It remains to verify that $\{\pi P_\epsilon^n\}_{n\in\NN}$ is an
$L_2(\pi)$-Cauchy sequence, and thus from completeness it must have
an $L_2(\pi)$-limit. To this end, define $Q_\epsilon = (P_\epsilon - P)$. Let $\mathbf{2}^k=\{0,1\}^k$ for all $k\in\NN$. We will expand $\pi(P+Q_\epsilon)^n$ and use the following facts:
\begin{enumerate}
\item[(A)] $\forall R\in\Bb(L_2(\pi))\ [\pi P^n R = \pi R]$
\item[(B)] $Q_\epsilon : L_2(\pi)\to L_{2,0}(\pi)$
\item[(C)] $P\restrict{L_{2,0}(\pi)}\in\Bb(L_{2,0}(\pi))$ and $\normpiR{P\restrict{L_{2,0}(\pi)}}\leq (1-\alpha)$
\end{enumerate}

Since the operators $P$ and $Q_\epsilon$ do not (necessarily) commute, when we expand $(P+Q)^n$ we must have one distinct term per binary sequence of length $n$. We can then group terms by the number of leading $P$s, and use (A) to cancel the leading terms.

Let $m,n\in\mathbf{N}$ be arbitrary with $m\leq n$.

\*[
&\hspace{-1em}\normpi{ \pi P_\epsilon^n -\pi P_\epsilon^m}\\
    &= \normpi{ \pi (P+Q_\epsilon)^n - \pi (P+Q_\epsilon)^m} \\
    &= \normpi{\pi \lcr[{\lcr({\sum_{\mathbf{b}\in\mathbf{2}^n}\prod_{j=1}^n P^{b_j} Q_\epsilon^{1-b_j} })-\lcr({\sum_{\mathbf{b}\in\mathbf{2}^m} \prod_{j=1}^m  P^{b_j} Q_\epsilon^{1-b_j} })}]}\\
    & =\lcr\Vert{\pi\lcr[{\lcr({P^n+\sum_{k=0}^{n-1}P^{n-k-1}Q_\epsilon\sum_{\mathbf{b}\in\mathbf{2}^{k}}\prod_{j=1}^{k} P^{b_j} Q_\epsilon^{1-b_j}}) - \lcr({P^m+\sum_{k=0}^{m-1}P^{m-k-1}Q_\epsilon\sum_{\mathbf{b}\in\mathbf{2}^{k}}    \prod_{j=1}^{k} P^{b_j} Q_\epsilon^{1-b_j} })}]}\Vert_{L_2(\pi)}\\
    & = \lcr\Vert{\lcr({\pi+\sum_{k=0}^{n-1}\pi Q_\epsilon\sum_{\mathbf{b}\in\mathbf{2}^{k}} \prod_{j=1}^{k} P^{b_j} Q_\epsilon^{1-b_j} }) -\lcr({\pi+\sum_{k=0}^{m-1}\pi Q_\epsilon\sum_{\mathbf{b}\in\mathbf{2}^{k}} \prod_{j=1}^{k} P^{b_j} Q_\epsilon^{1-b_j} })}\Vert_{L_2(\pi)}\\
    & = \normpi{\pi \sum_{k=m}^{n-1}Q_\epsilon\sum_{\mathbf{b}\in\mathbf{2}^{k}} \prod_{j=1}^{k} P^{b_j} Q_\epsilon^{1-b_j}}\\
    &\leq \epsilon
    \sum_{k=m}^{n-1}\sum_{\mathbf{b}\in\mathbf{2}^{k}} \prod_{j=1}^{k}
    (1-\alpha)^{b_j} \epsilon^{1-b_j}\\
    &=\epsilon \sum_{k=m}^{n-1}(1-\alpha+\epsilon)^{k}\\
    &\leq \frac{\epsilon}{\alpha-\epsilon} (1-\alpha+\epsilon)^m
\]
Since this upper bound on $\normpi{\pi P_\epsilon^n -  \pi
P_\epsilon^m}$ decreases to 0 monotonically in $m=\min(m,n)$
then the sequence must be $L_2(\pi)$-Cauchy.

Now, to bound the norm of $\pi_\epsilon$ we take $m=0$ and we get that for all $n\in \Nats$:
\*[
	\normpi{ \pi P_\epsilon^n -\pi} \leq \frac{\epsilon}{\alpha-\epsilon}
\]
From the continuity of norm, it must be the case that $\normpi{\pi_\epsilon- \pi}\leq \frac{\epsilon}{\alpha-\epsilon} $
\end{proof}

\begin{lemma}
\label{lem:l2bound}%
Under the assumptions of \cref{sec-assump},
if in addition $\epsilon<\alpha$ then
\*[1\leq\normpi{\pi_\epsilon} \leq
\frac{\alpha}{\sqrt{\alpha^2-\epsilon^2}}\]
and
\*[0\leq\normpi{\pi-\pi_\epsilon} \leq
\frac{\epsilon}{\sqrt{\alpha^2-\epsilon^2}}.\]
\end{lemma}

\begin{proof}
The two lower bounds are immediate from
\cref{normeqlemma} and the positivity of norms:
\*[
0 \leq \normpi{\pi-\pi_\epsilon}^2 = \normpi{\pi_\epsilon}^2-1
\]
To derive the first upper bound, we apply
\cref{normeqlemma}, our assumptions about the operators $P$
and $P_\epsilon$, and triangle inequality, to $\Vert \pi-\pi_\epsilon\Vert_2$:
\*[
\sqrt{\normpi{\pi_\epsilon}^2-1}= \normpi{\pi-\pi_\epsilon}
    &=\normpi{\pi P - \pi_\epsilon P+ \pi_\epsilon
    P - \pi_\epsilon P_\epsilon}\\
    &\leq\normpi{\pi P - \pi_\epsilon P}  + \normpi{ \pi_\epsilon
    P - \pi_\epsilon P_\epsilon} \\
    &\leq (1-\alpha)\normpi{\pi-\pi_\epsilon} +\epsilon\normpi{
    \pi_\epsilon} \\
    &= (1-\alpha)\sqrt{\normpi{ \pi_\epsilon} ^2-1}+\epsilon\normpi{
    \pi_\epsilon}
\]
Collecting the square roots and squaring both sides yields
\*[
\alpha^2\lcr({\normpi{ \pi_\epsilon} ^2-1})
    &\leq \epsilon^2\normpi{\pi_\epsilon}^2
\]
which implies that
\*[
\normpi{\pi_\epsilon}^2
    &\leq \frac{\alpha^2}{\alpha^2-\epsilon^2}
\]
Finally, the second upper bound is derived from the first one,
again using \cref{normeqlemma}:
\*[
\normpi{\pi-\pi_\epsilon}^2 = \normpi{\pi_\epsilon}^2-1 \leq \frac{\alpha^2}{\alpha^2-\epsilon^2} - 1 = \frac{\epsilon^2}{\alpha^2-\epsilon^2}
\]
\end{proof}

We next observe that our assumptions imply that for small
enough perturbations, the perturbed chain $P_\epsilon$ is geometrically
ergodic in the $L_2(\pi)$ norm.

\begin{lemma}
\label{lem:l2pi}
Under the assumptions of \cref{sec-assump},
if $\epsilon<\alpha$, then $P_\epsilon$ is $L_2(\pi)$-geometrically
ergodic, with factor $\leq 1-(\alpha-\epsilon)$.
\end{lemma}

\begin{proof}
Suppose that $\nu\in L_{2,0}(\pi)$.  Then
\*[
	\normpi{\nu P_\epsilon}
		& \leq \normpi{\nu (P_\epsilon -P)} +\normpi{\nu P}\\
		& \leq \epsilon \normpi{\nu} + (1-\alpha) \normpi{\nu}\\
		& =(1-\alpha+\epsilon)\normpi{\nu}
\, .
\]
Thus, for any probability measure $\mu\in L_2(\pi)$,
since $\pi_\epsilon \in L_2(\pi)$ we have
\*[
	\normpi{\mu P_\epsilon^n -\pi_\epsilon}
		& = \normpi{(\mu-\pi_\epsilon) P_\epsilon^n} \\
		& \leq (1-(\alpha-\epsilon))^n \normpi{\mu -
\pi_\epsilon}
\, .
\]
\end{proof}

Combining \cref{lem:piep,lem:l2bound,lem:l2pi} together with the triangle inequality immediately yields \cref{combinedthm}.

\subsection{Proofs of \cref{l1l2geomthm}, \cref{l1l2geomthm-rev} and \cref{cor:marg-error-bd}}
\label{ax:proofs-l1l2-margerr}

\begin{definition}\normalfont\label{hypersmall}
Following \citep{roberts1997geometric}, a subset $S\subset\Xx$
is called \textit{hyper-small} for the $\pi$-irreducible Markov kernel $P$ with stationary measure $\pi$ if $\pi(S)>0$
and there exists $\delta_S>0$ and $k\in\NN$ such that $\rnderiv{
P^k(x,\cdot)}{\pi} (y)\geq \delta_S\one_S(x)\one_S(y)$ or equivalently
$P^k(x,A)\geq \delta_S\pi(A)$ for all $x\in S$ and $A\subset S$
measurable.
\end{definition}

Lemma~4 of \citet{jain1967contributions}
states that on a countably generated state
space (as we have assumed herein), every set of positive
$\pi$-measure contains a hyper-small subset.

\begin{lemma}[Existence of Hyper-Small Subsets from \citep{jain1967contributions}]
\label{jjresult}
Suppose that $(\Xx,\Sigma)$ is countably generated. Suppose that $X$ is a a $\phi$-irreducible Markov chain on $\Xx$ with kernel $P$ for some $\sigma$-finite measure $\phi$ on $\Xx$. Then any set $K\subset \Xx$ with $\phi(K)>0$ contains a set $S_K$ such that (for some $n_{K}\in\NN$)
\*[
	\inf_{(x,y) \in S_K\times S_K} \rnderiv{P^{n_K}(x,\cdot)}{\pi} (y) = \delta >0
\]
\end{lemma}
In the case that a stationary distribution, $\pi$, for $P$ exists, without loss of generality we can take $\phi=\pi$. In this case, it is immediate that any set $(S_K, n_K)$ satisfying \cref{jjresult} also satisfies \cref{hypersmall}.

Also of importance to us is the following variant of Proposition~2.1 of \citep{roberts1997geometric}, which provides a characterization of geometric
ergodicity in terms of convergence to a hyper-small set.

\begin{proposition}[Equivalent characterizations of $\pi$-a.e.-TV geometric ergodicity from \citep{roberts1997geometric} and \citet{nummelin1978geometric}]
\label{ge-char}
Suppose that $(\Omega,\Sigma)$ is countably generated, and that $X$ is a a $\phi$-irreducible Markov chain on $\Xx$ with kernel $P$ with stationary distribution $\pi$. Then the following are equivalent:
\begin{itemize}
	\item[(i)] There exists $\rho_\tv\in(0,1)$ such that $P$ is $\pi$-a.e.-TV geometrically ergodic with factor $\rho_\tv$
	\item[(i${}^{\prime\prime}$)] There exists a hyper-small set $S\subset \Xx$, and constants $\rho_S<1$, $C_S\in\RR_+$ such that:
	\*[
		\normTV{\int \frac{\one_S(y)\pi(\dee y)}{\pi(S)} P^n(y,\cdot) -\pi} \leq C_S \rho_S^n	 \qquad \forall n\in\NN
	\]
	\item[(ii)] There exists a $\pi$-a.e. finite, measurable function $V:\Xx\to[1,\infty]$ with $\pi(V^2) <\infty$, and $\rho_V\in(0,1)$, and $C>0$ such that:
	\*[
		2\normTV{\delta_x P^n -\pi}\leq \norm{\delta_x P^n -\pi}_V \leq C V(x) \rho_V^n
	\]
	where $\norm{\mu}_V = \sup_{\abs{f}\leq V} \abs{\mu(f)}$.
\end{itemize}
\end{proposition}

\begin{proof}[Proof of \cref{l1l2geomthm}]
	\textit{(i)} Let $S$ be a hyper-small set for $P_\epsilon$ (which exists from \cref{jjresult}, since $P_\epsilon$ is $\pi_\epsilon$-irreducible). Then the measure $\mu_S$ defined by $\rnderiv{\mu_S}{\pi} = \frac{\Ind{S}}{\pi_\epsilon(S)} \rnderiv{\pi_\epsilon}{\pi}$ has (by H\"older's inequality, and since $\pi_\epsilon\in L_2(\pi)$) that $\normpi{\mu_S}^2 \leq \normpi{\pi_\epsilon}^2 \pi_\epsilon(S)^{-2}<\infty $, and hence $\mu_S\in L_2(\pi)$. Then (by Cauchy-Shwarz again):
	\*[
		\normTV{\int \frac{\one_S(y)\pi_\epsilon(\dee y)}{\pi_\epsilon(S)} P_\epsilon^n(y,\cdot) -\pi_\epsilon}
			\leq \frac{1}{2} \normpi{\mu_S P_\epsilon^n -\pi_\epsilon} \leq \normpi{\mu_S - \pi_\epsilon} (1-\alpha +\epsilon)^n
	\]
	which, along with \cref{ge-char}, establishes that $P_\epsilon$ is $\pi_\epsilon$-a.e.-TV geometrically ergodic with some factor $\rho_\tv\in(0,1)$.

	\textit{(ii)} Suppose that $\mu\in L_\infty(\pi_\epsilon)$. Then $\mu \in L_2(\pi)$ since $\rnderiv{\mu}{\pi} \leq \norm{\mu}_{L_\infty(\pi_\epsilon)} \rnderiv{\pi_\epsilon}{\pi}$. Since $\mu P_\epsilon^n - \pi_\epsilon \in L_1(\pi_\epsilon)\subset L_1(\pi)$ then
	\[
		\norm{\mu P_\epsilon^n - \pi_\epsilon}_{L_1(\pi_\epsilon)}
			= \norm{\mu P_\epsilon^n - \pi_\epsilon}_{L_1(\pi)}
				= 2\normTV{\mu P_\epsilon^n - \pi_\epsilon}.
	\]
	Applying this equality as well as Cauchy-Schwarz we get
	\[
		\norm{\mu P_\epsilon^n - \pi_\epsilon}_{L_1(\pi_\epsilon)}
			& = \norm{\mu P_\epsilon^n - \pi_\epsilon}_{L_1(\pi)} \\
			& \leq \norm{\mu P_\epsilon^n - \pi_\epsilon}_{L_2(\pi)} \\
			& \leq \normpi{\mu-\pi_\epsilon}(1-\alpha-\epsilon)^n
	\]

	\textit{(iii)}
	If $\pi\in L_\infty(\pi_\epsilon)$ and $\mu\in L_2(\pi_\epsilon)$ then
	\[\label{eqn:eps-contraction}
		\norm{\mu P_\epsilon^n - \pi_\epsilon}_{L_2(\pi_\epsilon)}^2
			& = \int \lcr({\rnderiv{\mu P_\epsilon^n - \pi_\epsilon}{\pi_\epsilon}})^2 d \pi_\epsilon \\
			& = \int \lcr({\rnderiv{\mu P_\epsilon^n - \pi_\epsilon}{\pi}})^2 \rnderiv{\pi}{\pi_\epsilon} d \pi\\
			&\leq \norm{\pi}_{L_\infty(\pi_\epsilon)}\int \lcr({\rnderiv{\mu P_\epsilon^n - \pi_\epsilon}{\pi}})^2  d \pi\\
			& = \norm{\pi}_{L_\infty(\pi_\epsilon)}\normpi{\mu P_\epsilon^n - \pi_\epsilon}^2\\
			& \leq \norm{\pi}_{L_\infty(\pi_\epsilon)}\normpi{\mu-\pi_\epsilon}^2(1-(\alpha-\epsilon))^{2n}
	\]

\end{proof}

\begin{proof}[Proof of \cref{l1l2geomthm-rev}]
  From \citet[Lemma~1]{baxter1995rates}, since $P_\epsilon$ has stationary measure $\pi_\epsilon$, then $P_\epsilon: L_2(\pi_\epsilon)\to L_2(\pi_\epsilon)$.
  Since $P_\epsilon$ is $(L_\infty(\pi_\epsilon), \norm{\cdot}_{L_1(\pi_\epsilon)})$-GE with factor $\rho_1 \leq (1-(\alpha-\epsilon))$ (as established by \cref{l1l2geomthm}) and $P_\epsilon$ is reversible, then it must also be $L_2(\pi_\epsilon)$-geometrically ergodic with factor $\rho =\rho_1$ by \cref{lem:rt-comment}.
\end{proof}

\begin{proof}[Proof of \cref{cor:marg-error-bd}]
Note that the assumption that $\normpiE{P-P_\epsilon}<\epseps$ implies $P-P_\epsilon : L_2(\pi_\epsilon)\to L_2(\pi_\epsilon)$.

\textit{(i)} Since $P_\epsilon$ is $L_2(\pi_\epsilon)$-geometrically ergodic with factor $(1-(\alpha-\epsilon))$ and $\pi_\epsilon$-reversible, we can reverse the roles of $P$ and $P_\epsilon$, so the result follows by \cref{combinedthm}.

\textit{(ii)}
Taking $\mu = \pi$ and $n=1$ in \cref{l1l2geomthm} \textit{(iii)},
\[
	\norm{\pi P_\epsilon - \pi_\epsilon}_{L_2(\pi_\epsilon)}^2
		& \leq \norm{\pi}_{L_\infty(\pi_\epsilon)}\normpi{\pi-\pi_\epsilon}^2(1-(\alpha-\epsilon))^2 \\
		&\leq \norm{\pi}_{L_\infty(\pi_\epsilon)}\frac{\epsilon^2}{\alpha^2-\epsilon^2}(1-(\alpha-\epsilon))^2
\]
Hence,
\[
		\norm{\pi - \pi_\epsilon}_{L_2(\pi_\epsilon)}
		&\leq \norm{\pi P - \pi P_\epsilon}_{L_2(\pi_\epsilon)} +\norm{\pi P_\epsilon - \pi_\epsilon}_{L_2(\pi_\epsilon)} \\
		& \leq \epseps\normpiE{\pi} + \norm{\pi}_{L_\infty(\pi_\epsilon)}^{1/2}\frac{\epsilon}{\sqrt{\alpha^2-\epsilon^2}}(1-(\alpha-\epsilon))\\
		& = \epseps\sqrt{\normpiE{\pi-\pi_\epsilon}^2+1} + \norm{\pi}_{L_\infty(\pi_\epsilon)}^{1/2}\frac{\epsilon}{\sqrt{\alpha^2-\epsilon^2}}(1-(\alpha-\epsilon))\\
		& \leq \epseps(\normpiE{\pi-\pi_\epsilon}+1) + \norm{\pi}_{L_\infty(\pi_\epsilon)}^{1/2}\frac{\epsilon}{\sqrt{\alpha^2-\epsilon^2}}(1-(\alpha-\epsilon))
\]
Hence,
\[\label{eqn:approx-error-eps}
	\normpiE{\pi-\pi_\epsilon}
		\leq \frac{\epseps + \norm{\pi}_{L_\infty(\pi_\epsilon)}^{1/2}\frac{\epsilon}{\sqrt{\alpha^2-\epsilon^2}}(1-(\alpha-\epsilon))}{1-\epseps}
\]
Finally,
\*[
	\normpiE{ \mu P_\epsilon^n -\pi}
		& \leq \normpiE{ \mu P_\epsilon^n -\pi_\epsilon}+\normpiE{\pi_\epsilon - \pi},
\]
The first term is bounded by \cref{l1l2geomthm}~\textit{(iii)}, and the second term is bounded by \cref{eqn:approx-error-eps}
\end{proof}

\subsection{Proofs of \cref{perturberrorthm} and \cref{mse_perturb}}

\subsubsection{Time-Averaging of Marginal Distributions}
\label{ax:time-avg-proofs}

\begin{proof}[Proof of \cref{perturberrorthm}]
The first result of  \cref{perturberrorthm} follows from the triangle inequality and \cref{combinedthm},
\*[
\normpi{\pi -\frac{1}{t}\sum_{k=0}^{t-1} \mu P_\epsilon^k}
    &\leq \frac{1}{t} \sum_{k=0}^{t-1}\normpi{\pi - \mu
    P_\epsilon^k }\\
    &\leq \frac{1}{t} \sum_{k=0}^{t-1}\left[(1-(\alpha-\epsilon))^k
    \normpi{\pi_\epsilon-\mu}
    +\frac{\epsilon}{\sqrt{\alpha^2-\epsilon^2}}\right]\\
    & \leq \frac{1-(1-(\alpha-\epsilon))^t}{t(\alpha-\epsilon)}\normpi{
    \pi_\epsilon-\mu}
    +\frac{\epsilon}{\sqrt{\alpha^2-\epsilon^2}} \ .
\]
The subsequent results follows from similarly via \cref{l1l2geomthm,l1l2geomthm-rev,cor:marg-error-bd}.
\end{proof}

\subsubsection{Covariance Bounds}
We turn our attention to the covariance structure of the original and perturbed chains.
There is an obvious isometric isomorphism between the space of measures $L_2(\pi)$ and the function space $L'_2(\pi) = \set{f:\Xx\to\RR \st \int f(x)^2\pi(dx) <\infty}$ equipped with the norm $\norm{f}^2_{L'_2(\pi)} = \int f(x)^2\pi(dx)$ where a measure $\mu$ is mapped to its Radon--Nikodym derivative $\mu\mapsto \rnderiv{\mu}{\pi}$.
For this reason, we need not distinguish between these spaces, and when dealing with a function $f\in L'_2(\pi)$ we may occasionally abuse notation and treat it as its associated measure.
Let $X_t$ and $X^\epsilon_t$ denote the original
and perturbed chains run from some initial measure $\mu\in L_2(\pi)$.

\begin{corollary}
\label{fgcor}%
Under the assumptions of \cref{sec-assump},
\begin{itemize}
\item[(a)] if $X_0\sim\pi$ (the initial distribution is the stationary distribution), then for $f,g\in L'_2(\pi)$
\[
	\Cov [f(X_t),g(X_s)]\leq (1-\alpha)^{|t-s|} \normpiD{f-\pi f}\normpiD{g-\pi g}\ ,
\]
\item[(b)] if $\epsilon<\alpha$, and $P_\epsilon$ is $\pi_\epsilon$-reversible, $\rho_2 = (1-(\alpha-\epsilon))$, and $X^\epsilon_0\sim\pi_\epsilon$ , then for $f,g\in L_2'(\pi_\epsilon)$
\[
	\Cov [f(X^\epsilon_t),g(X^\epsilon_s)]\leq \rho_2^{|t-s|}\normpiDE{ f - \pi_\epsilon f}\normpiDE{g - \pi_\epsilon g}\ ,
\]
\end{itemize}
where for a function $h:\Xx\to\RR$, $\pi h$ is the constant function equal to $\int h(s)\pi(ds)$ everywhere.
\end{corollary}

\begin{proof}
The proof of this result follows that of Corollary B.5 in \citep{johndrow2015approximations}.
We only show the proof for the original chain, however the proof for the perturbed chain is the same, since it is reversible and $L_2(\pi_\epsilon)$ geometrically ergodic with the appropriate factor, from \cref{l1l2geomthm-rev}.

Define the subspace
\*[
L'_{2,0}(\pi) = \{h\in L'_2(\pi): \int h(s)\pi(d s) = 0\}\ ,
\]
and the operator $F \in \mathcal{B}(L'_{2,0}(\pi))$ by
\*[
[F f](x) = \int P(x,d y)f(y) = \mathbb{E}[f(X_1)|X_0=x]
\]
From Lemma~12.6.4 of \citet{liu2008monte},
\*[
	\sup_{f,g\in L_2'(\pi)} \text{corr}(f(X_0),g(X_t))
			= \sup_{\substack{\normpiD{f}=1=\normpiD{g}\\f,g\in L'_{2,0}(\pi)}} \langle f,F^t g\rangle
			= \norm{F^t}_{L_{2,0}'(\pi)}
\]

Consider the canonical isomorphism between $L_2(\pi)$ and
$L'_2(\pi)$. The restriction of this isomorphism (on the right)
to elements of $L'_{2,0}(\pi)$ yields $L_{2,0}(\pi)$ (on the left) -- the signed measures with total measure $0$. The image of $F$ under the restricted isomorphism is the adjoint operator of $P$ restricted to $L_{2,0}(\pi)$. Since $P$ is $\pi$-reversible, it is self-adjoint, in $L_2(\pi)$ so $\norm{F}_{L_{2,0}'(\pi)}= \norm{P}_{L_{2,0}(\pi)}$.
\*[
	\norm{F^t}_{L_{2,0}'(\pi)} \leq \norm{F}_{L_{2,0}'(\pi)}^t = \norm{P\big\vert_{L_{2,0}(\pi)}}^t \leq(1-\alpha)^t
\]

Therefore
\*[
	\Cov(f(X_0),g(X_t))\leq \normpiD{f - \pi f}\normpiD{g-\pi g} (1-\alpha)^t
\]

Since $\Cov$ is symmetric, the shifted and symmetrized result holds for any $f,g\in L'_2(\pi)$:
\[
	\Cov [f(X_t),g(X_s)]\leq (1-\alpha)^{|t-s|} \normpiD{f -\pi f} \normpiD{ g - \pi g}
\]
\end{proof}
We present further bounds for the case that the initial distribution is not the stationary distribution in \cref{fgcor2}.
\begin{remark}\normalfont
\label{hstarremark}
Note in \cref{fgcor} that
\*[
\normpiD{h - \pi h} = \sqrt{\normpiD{ h}^2-(\pi h)^2}\leq \normpiD{h} \ .
\]
Also note that \[\normpiD{h} \leq \normpiD{h-\pi(h)}  + |\pi(h)| \ .\]
\end{remark}

\begin{corollary}
\label{fgcor2}
Under the assumptions of \cref{sec-assump},
\begin{itemize}
\item[(a)] if $X_0\sim\mu$, then for $f,g\in L'_4(\pi)$
\*[
&\Cov(f(X_t),g(X_{t+s}))\\
    &\qquad\leq (1-\alpha)^s \normpiD{f -\pi f}\normpiD{ g - \pi g} \\
        &\qquad\qquad + 2^{3/2}(1-\alpha)^{t+s/2}\normpi{\mu-\pi} \norm{f -\pi f}_{L_4'(\pi)}\norm{ g - \pi g}_{L_4'(\pi)}\\
        &\qquad\qquad -(\mu P^t f-\pi f)\left(\mu P^{t+s} g -\pi g\right)
\]
\item[(b)] if $\epsilon<\alpha$, and $P_\epsilon$ is $\pi_\epsilon$-reversible, $\rho_2 = (1-(\alpha -\epsilon))$, and $X^\epsilon_0\sim\mu$ , then for $f,g\in L'_4(\pi_\epsilon)$
\*[
&\Cov(f(X^\epsilon_t),g(X^\epsilon_{t+s}))\\
    &\qquad\leq \rho_2^s \normpiDE{f -\pi_\epsilon f}\normpiDE{ g - \pi_\epsilon g} \\
        &\qquad\qquad + 2^{3/2}\rho_2^{t+s/2}\normpiE{\mu-\pi} \norm{f -\pi_\epsilon f}_{L_4'(\pi_\epsilon)}\norm{ g - \pi_\epsilon g}_{L_4'(\pi_\epsilon)}\\
        &\qquad\qquad -(\mu P_\epsilon^t f-\pi_\epsilon f)\left(\mu P_\epsilon^{t+s} g -\pi_\epsilon g\right)
\]
\end{itemize}
\end{corollary}
\begin{proof}
This will use the following shorthand notation. Let
\*[
f_0 &= f-\pi f\\
g_0 &= g-\pi g\\
\Vert h\Vert_\star &= \left(\int (h(x)-\pi h)^2\pi(d x)\right)^{1/2}\\
\Vert h\Vert_\dstar &= \left(\int (h(x)-\pi h)^4\pi(d x)\right)^{1/4}\\
C_\mu &= \Vert \mu -\pi\Vert_2
\]

$\Vert\cdot\Vert_\dstar$ can be interpreted as a centred 4-norm. It is certainly bounded above by $\Vert\cdot\Vert_4$, the norm on $L'_4(\pi)$. For some results regarding the properties of a Markov transition kernel as an operator on $L'_p(\pi)$ for general $p$ given an $L_2$-spectral gap (as is implied by $L_2$-geometric ergodicity) please refer to \citet{rudolf2011explicit}.

\label{pf:fgcor2}
We only show the proof for the original chain. The result for the perturbed chain has essentially the same proof.

By definition we can express the covariance by the triple integral below. We re-express this integral as a sum of two integrals involving the chain run from stationarity. This will allow us to apply \cref{fgcor}.
\*[
&\Cov(f(X_t),g(X_{t+s}))\\
     &\ = \iiint (f(y)-\mu P^t f)(g(z)-\mu P^{t+s} g)\mu(d x)P^t(x,d y)P^s(y,dz)\\
     &\ = \iiint (f(y)-\mu P^t f)(g(z)-\mu P^{t+s} g)\left[\frac{d\mu}{d\pi}(x)-1\right] \pi(d x)P^t(x,d y)P^s(y,dz)\hspace{2em}\\
     &\ \quad +\iiint(f(y)-\mu P^t f)(g(z)-\mu P^{t+s} g)\pi(d x)P^t(x,d y)P^s(y,dz)
\]
We will simplify each of these expressions separately, starting with the second term:
\*[
&\iiint(f(y)-\mu P^t f)(g(z)-\mu P^{t+s} g)\pi(d x)P^t(x,d y)P^s(y,dz)\\
    &\qquad = \iint(f(y)-\mu P^t f)(g(z)-\mu P^{t+s} g)\pi(d y)P^s(y,dz)\\
    &\qquad = \iint f(y)g(z)\pi(d y)P^s(y,dz) \\
        &\qquad \qquad - (\mu P^t f)(\pi g)-(\pi f)(\mu P^{t+s} g)+(\mu P^t f)(\mu P^{t+s} g)\\
    &\qquad = \iint f_0(y)g_0(z) \pi(d y)P^s(y,dz) +(\pi f)(\pi g)\\
    &\qquad \qquad - (\mu P^t f)(\pi g)-(\pi f)(\mu P^{s+t} g)+(\mu P^t f)(\mu P^{t+s} g)\\
    &\qquad = \left\langle f_0, F^s g_0 \right\rangle  +(\mu P^t f-\pi f )(\mu P^{s+t} g - \pi g )
\]

For the first term we find that:
\*[
&\iiint (f(y)-\mu P^t f)(g(z)-\mu P^{t+s} g)\left(\frac{d\mu}{d\pi}(x)-1\right)\pi(d x)P^t(x,d y)P^s(y,dz)\hspace{2em}\\
    &\qquad = \iiint f(y)g(z)\left(\frac{d\mu}{d\pi}(x)-1\right)\pi(d x)P^t(x,d y)P^s(y,dz)\\
        &\qquad\qquad-(\mu P^t f)\iint g(z)\left(\frac{d\mu}{d\pi}(x)-1\right)\pi(d x)P^{t+s}(x,dz)\\
        &\qquad\qquad-(\mu P^{s+t} g)\iint f(y)\left(\frac{d\mu}{d\pi}(x)-1\right)\pi(d x)P^{t}(x,d y)\\
        &\qquad\qquad+(\mu P^t f)(\mu P^{s+t} g)\int\left(\frac{d\mu}{d\pi}(x)-1\right)\pi(d x)\\
    &\qquad = \iiint f_0(y) g_0(z)\left(\frac{d\mu}{d\pi}(x)-1\right)\pi(d x)P^t(x,d y)P^s(y,dz)\\
        &\qquad\qquad-(\mu P^t f-\pi f)\iint g(z)\left(\frac{d\mu}{d\pi}(x)-1\right)\pi(d x)P^{t+s}(x,dz)\\
        &\qquad\qquad-(\mu P^{s+t} g -\pi g)\iint f(y)\left(\frac{d\mu}{d\pi}(x)-1\right)\pi(d x)P^{t}(x,d y)\\
        &\qquad\qquad -(\pi f)(\pi g)\int\left(\frac{d\mu}{d\pi}(x)-1\right)\pi(d x)\\
    &\qquad =\left\langle\frac{d\mu}{d\pi}-1, F^t (f_0\otimes (F^s g_0))\right\rangle\\
        &\qquad \qquad -(\mu P^t f-\pi f)\left\langle \frac{d\mu}{d\pi}-1,F^{t+s} g\right\rangle-(\mu P^{t+s} g -\pi g)\left\langle \frac{d\mu}{d\pi}-1,F^{t} f\right\rangle\\
    &\qquad =\left\langle\frac{d\mu}{d\pi}-1, F^t (f_0\otimes (F^s g_0))\right\rangle-2(\mu P^t f-\pi f)\left(\mu P^{t+s} g -\pi g\right)\\
\]
Where $f_0\otimes F^s g_0$ is defined by
\*[
    [f_0\otimes F^s g_0](y) = f_0(y) \int g_0(z)P^s(y,dz)
\]
Putting these together,
\*[
    &\Cov(f(X_t),g(X_{t+s}))\\
    &\qquad = \left\langle f_0, F^s g_0 \right\rangle  +(\pi f -\mu P^t f)(\pi g - \mu P^{s+t} g)\\
        &\qquad \qquad + \left\langle\frac{d\mu}{d\pi}-1, F^t (f_0\otimes (F^s g_0))\right\rangle-2(\mu P^t f-\pi f)\left(\mu P^{t+s} g -\pi g\right)\\
    &\qquad = \left\langle f_0, F^s g_0 \right\rangle + \left\langle\frac{d\mu}{d\pi}-1, F^t (f_0\otimes (F^s g_0))\right\rangle -(\mu P^t f-\pi f)\left(\mu P^{t+s} g -\pi g\right)\\
    &\qquad \leq (1-\alpha)^s \Vert f\Vert_\star \Vert g\Vert_\star +     (1-\alpha)^{t}\Vert \mu-\pi\Vert_2 \Vert f_0\otimes F^s g_0\Vert_2\\
        &\qquad\qquad -(\mu P^t f-\pi f)\left(\mu P^{t+s} g -\pi g\right)\\
    &\qquad \leq (1-\alpha)^s \Vert f\Vert_\star \Vert g\Vert_\star +     (1-\alpha)^{t}\Vert \mu-\pi\Vert_2 \Vert f_0 \Vert_4\Vert F^s g_0\Vert_4\\
        &\qquad\qquad -(\mu P^t f-\pi f)\left(\mu P^{t+s} g -\pi g\right)\\
    &\qquad \leq (1-\alpha)^s \Vert f\Vert_\star \Vert g\Vert_\star +     (1-\alpha)^{t}\Vert \mu-\pi\Vert_2 \Vert f_0 \Vert_4\Vert g_0\Vert_4 \left\Vert F^s\big\vert_{L'_{4,0}}\right\Vert_4\\
        &\qquad\qquad -(\mu P^t f-\pi f)\left(\mu P^{t+s} g -\pi g\right)\\
    &\qquad \leq (1-\alpha)^s \Vert f\Vert_\star \Vert g\Vert_\star +     2^{3/2}(1-\alpha)^{t+s/2}\Vert \mu-\pi\Vert_2 \Vert f \Vert_\dstar\Vert g\Vert_\dstar\\
        &\qquad\qquad -(\mu P^t f-\pi f)\left(\mu P^{t+s} g -\pi g\right)\\
\]
The $\left\langle f_0, F^s g_0 \right\rangle$ term is bounded using \cref{fgcor} where we have taken the result in its equivalent form using the $\langle\cdot,\cdot\rangle$ notation and the forward operator $F$.
The $\left\langle\frac{d\mu}{d\pi}-1, F^t (f_0\otimes (F^s g_0))\right\rangle$ term is bounded following the methodology of the proof of \citep{rudolf2011explicit}, Lemma 3.39 (in order the inequalities are:
Cauchy-Schwarz, $\Vert F^s g_0\Vert \leq \Vert F^s\Vert\Vert g_0\Vert$ for any norm $\Vert\cdot\Vert$, and Proposition 3.17 of \citep{rudolf2011explicit}).
\end{proof}

The main motivation in establishing the covariance bounds in \cref{fgcor,fgcor2} is that we will need to sum up covariances in order to establish bounds on the variance component of mean-squared error for estimation of $\pi(f)$ via the dependent sample means $\frac{1}{t}\sum_{j=0}^{t-1} f(X_j)$ and $\frac{1}{t}\sum_{j=0}^{t-1} f(X^\epsilon_j)$ for an arbitrary starting measure. To this end we will be interested in the following summation result.

\begin{corollary} \label{fgcorSum}
Under the assumptions of \cref{sec-assump},
\begin{itemize}
\item[(a)] if $X_0\sim\mu$, then for $f,g\in L'_4(\pi)$
\*[
&\frac{1}{t^2}\sum_{m=0}^{t-1}\sum_{n=0}^{t-1} \Cov (f(X_j),f(X_k)) \\
    &\qquad\leq  \frac{2 \normpiD{f - \pi f}^2}{\alpha t}
        + \frac{2^{7/2} \normpi{\mu -\pi}\norm{f -\pi f}_{L_4'(\pi)}^2}{\alpha^2 t^2}
        -\left(\frac{1}{t}\sum_{m=0}^{t-1}\mu P^m f-\pi f\right)^2
\]
\item[(b)] if $\epsilon<\alpha$, and $P_\epsilon$ is $\pi_\epsilon$-reversible, $\rho_2 = (1-(\alpha-\epsilon))$, and $X^\epsilon_0\sim\mu$ , then for $f,g\in L'_4(\pi_\epsilon)$
\*[
&\frac{1}{t^2}\sum_{m=0}^{t-1}\sum_{n=0}^{t-1} \Cov (f(X_j^\epsilon),f(X_k^\epsilon)) \\
    &\qquad\leq  \frac{2 \normpiDE{f - \pi f}^2}{(1-\rho_2) t}
        + \frac{2^{7/2} \normpiE{\mu -\pi}\norm{f -\pi f}_{L_4'(\pi_\epsilon)}^2}{(1-\rho_2)^2 t^2}
        -\left(\frac{1}{t}\sum_{m=0}^{t-1}\mu P_\epsilon^m f-\pi f\right)^2
\]
\end{itemize}
\end{corollary}
\begin{proof}
We only show the proof for the original chain. The results for the perturbed chain have essentially the same proof. The proof is largely an exercise in summation of geometric series and meticulous bookkeeping. The first inequality is due to \cref{fgcor2}. The second inequality makes use of the fact $0<\alpha<1$. To simplify notation, $C_\mu = \normpi{\mu-\pi}$.

	\*[
	    &\frac{1}{t^2}\sum_{m=0}^{t-1}\sum_{n=0}^{t-1} \Cov (f(X_j),f(X_k))\\
	    &\qquad = \frac{\Vert f\Vert_\star^2}{t^2}\sum_{m=0}^{t-1}\sum_{n=0}^{t-1} (1-\alpha)^{\vert m - n\vert}
	        -\frac{1}{t^2}\sum_{m=0}^{t-1}\sum_{n=0}^{t-1}(\mu P^m f-\pi f)\left(\mu P^n f -\pi f\right)\\
	        &\qquad\qquad + \frac{2^{3/2} C_\mu \Vert f \Vert_\dstar^2}{t^2}\sum_{m=0}^{t-1}\sum_{n=0}^{t-1} (1-\alpha)^{(m+n)/2}\\
	    &\qquad = \frac{\Vert f\Vert_\star^2}{t^2}\sum_{m=0}^{t-1} \left(1+2\sum_{s=1}^{t-m-1} (1-\alpha)^{s}\right)
	        -\left(\frac{1}{t}\sum_{m=0}^{t-1}(\mu P^m f-\pi f)\right)^2\\
	        &\qquad\qquad+ \frac{2^{3/2} C_\mu \Vert f \Vert_\dstar^2}{t^2}\sum_{m=0}^{t-1}(1-\alpha)^m\left(1+2\sum_{s=1}^{t-m-1} (1-\alpha)^{s/2}\right)\\
	    &\qquad = \frac{\Vert f\Vert_\star^2}{t^2}\sum_{m=0}^{t-1} \left(1+2\frac{(1-\alpha)-(1-\alpha)^{t-m}}{\alpha}\right)
	        -\left(\frac{1}{t}\sum_{m=0}^{t-1}(\mu P^m f-\pi f)\right)^2\\
	        &\qquad\qquad+ \frac{2^{3/2} C_\mu \Vert f \Vert_\dstar^2}{t^2}\sum_{m=0}^{t-1}(1-\alpha)^m\left(1+2\frac{\sqrt{1-\alpha}-\sqrt{1-\alpha}^{t-m}}{1-\sqrt{1-\alpha}}\right)\\
	    &\qquad = \frac{\Vert f\Vert_\star^2}{t^2}\sum_{m=0}^{t-1} \left(\frac{2-\alpha}{\alpha} -\frac{2}{\alpha}(1-\alpha)^{t-m}\right)
	        -\left(\frac{1}{t}\sum_{m=0}^{t-1}(\mu P^m f-\pi f)\right)^2\\
	        &\qquad\qquad+ \frac{2^{3/2} C_\mu \Vert f \Vert_\dstar^2}{t^2}\sum_{m=0}^{t-1}\left((1-\alpha)^m\frac{1+\sqrt{1-\alpha}}{1-\sqrt{1-\alpha}}-2\frac{\sqrt{1-\alpha}^{t+m}}{1-\sqrt{1-\alpha}}\right)\\
	    &\qquad = \frac{\Vert f\Vert_\star^2}{t^2} \left(\frac{2-\alpha}{\alpha}t -\frac{2}{\alpha}\frac{(1-\alpha) - (1-\alpha)^{t+1}}{\alpha}\right)
	        -\left(\frac{1}{t}\sum_{m=0}^{t-1}(\mu P^m f-\pi f)\right)^2\\
	        &\qquad\qquad+ \frac{2^{3/2} C_\mu \Vert f \Vert_\dstar^2}{t^2}\left(\left[\frac{1+\sqrt{1-\alpha}}{1-\sqrt{1-\alpha}}\right]\left[\frac{1-(1-\alpha)^t}{\alpha}\right]\right.\\
	        &\qquad\qquad\hspace{10em}\left.-\left[\frac{2\sqrt{1-\alpha}^t}{1-\sqrt{1-\alpha}}\right]\left[\frac{1-\sqrt{1-\alpha}^{t}}{1-\sqrt{1-\alpha}}\right]\right)
	\]
	\*[
	    &\qquad = (2-\alpha)\frac{\Vert f\Vert_\star^2}{\alpha t}  - 2(1-\alpha)\frac{1-(1-\alpha)^t}{\alpha^2 t^2}
	        -\left(\frac{1}{t}\sum_{m=0}^{t-1}(\mu P^m f-\pi f)\right)^2\\
	        &\qquad\qquad+ \frac{2^{3/2} C_\mu \Vert f \Vert_\dstar^2}{t^2}\left(\frac{1+\sqrt{1-\alpha}}{\alpha}\right)^2(1-(1-\alpha)^{t/2})^2\\
	    &\qquad \leq \frac{2\Vert f\Vert_\star^2}{\alpha t}
	        + \frac{2^{7/2} C_\mu \Vert f \Vert_\dstar^2}{\alpha^2 t^2}
	        -\left(\frac{1}{t}\sum_{m=0}^{t-1}(\mu P^m f-\pi f)\right)^2
	\]

\end{proof}

\subsubsection{Mean Squared Error Bonds}
\label{ax:mse-proofs}

\begin{theorem}
Under the assumptions of \cref{sec-assump},
if $X_0\sim\mu\in L_2(\pi)$, then
\*[
&\mathbb{E}\left[\left(\pi(f) - \frac{1}{t}\sum_{k=0}^{t-1}
f(X_k)\right)^2\right]
    \leq  \frac{2\Vert f - \pi f\Vert_2^2}{\alpha t}
        + \frac{2^{7/2} \Vert \mu -\pi\Vert_2 \Vert f -\pi f \Vert_4^2}{\alpha^2 t^2}
\]
\end{theorem}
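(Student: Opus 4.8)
The plan is to use the standard bias--variance decomposition of the mean-squared error and then invoke Corollary~\ref{fgcorSum} to control the resulting variance. Write $\hat{f}_t = \frac{1}{t}\sum_{k=0}^{t-1} f(X_k)$ for the dependent sample mean (where, as the presence of $\Vert f - \pi f\Vert_4$ in the bound indicates, we take $f\in L'_4(\pi)$). Adding and subtracting $\mathbb{E}[\hat{f}_t]$ inside the square and taking expectations gives the orthogonal decomposition
\begin{align*}
\mathbb{E}\left[\left(\pi(f) - \hat{f}_t\right)^2\right]
    = \left(\pi(f) - \mathbb{E}[\hat{f}_t]\right)^2 + \Var\!\left(\hat{f}_t\right)\, ,
\end{align*}
so the first task is simply to separate the squared bias from the variance.

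Next I would identify each piece explicitly. Since $X$ is started from $\mu$, we have $\mathbb{E}[f(X_k)] = \mu P^k f$, whence $\mathbb{E}[\hat{f}_t] = \frac{1}{t}\sum_{k=0}^{t-1}\mu P^k f$ and the squared bias equals $\left(\frac{1}{t}\sum_{k=0}^{t-1}\mu P^k f - \pi f\right)^2$. For the variance, expanding the square of the sum yields the double sum of covariances $\Var(\hat{f}_t) = \frac{1}{t^2}\sum_{m=0}^{t-1}\sum_{n=0}^{t-1}\Cov(f(X_m),f(X_n))$, which is precisely the quantity bounded in Corollary~\ref{fgcorSum} upon taking $g = f$.

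Applying Corollary~\ref{fgcorSum} then bounds $\Var(\hat{f}_t)$ by
\begin{align*}
\frac{2\Vert f - \pi f\Vert_2^2}{\alpha t}
    + \frac{2^{7/2}\Vert \mu - \pi\Vert_2 \Vert f - \pi f\Vert_4^2}{\alpha^2 t^2}
    - \left(\frac{1}{t}\sum_{m=0}^{t-1}\mu P^m f - \pi f\right)^2\, .
\end{align*}
The last step is to substitute this bound and the explicit squared bias back into the decomposition above. The negative term here is exactly the squared bias, so the two cancel and only the two positive contributions remain, delivering the claimed bound.

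There is no genuine obstacle in this argument, as the substantive analytic work has already been carried out in Corollary~\ref{fgcorSum} (proved in Appendix~A). The single point worth emphasising is the exact cancellation of the squared-bias contribution: the extra term $-\left(\frac{1}{t}\sum_{m=0}^{t-1}\mu P^m f - \pi f\right)^2$ was retained in Corollary~\ref{fgcorSum} precisely so that, when combined with the $+\bigl(\pi(f) - \mathbb{E}[\hat{f}_t]\bigr)^2$ bias term of the decomposition, it vanishes and leaves a clean bound of order $O(1/t) + O(1/t^2)$ on the full mean-squared error.
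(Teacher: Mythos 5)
Your proof is correct and follows essentially the same route as the paper: the same bias--variance decomposition (the paper writes it by adding and subtracting $\frac{1}{t}\sum_{k=0}^{t-1}[\mu P^k](f)$, which is exactly your $\mathbb{E}[\hat{f}_t]$), followed by an application of Corollary~\ref{fgcorSum} and the observation that its negative term exactly cancels the squared bias. You also correctly flag the implicit hypothesis $f\in L'_4(\pi)$, which the paper's theorem statement leaves unstated.
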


\begin{proof}
The proof proceeds by partitioning the MSE via the bias-variance decomposition then bounding variance term and noting that our bond for the variance contains an expression which exactly cancels the bias term.
We compute that
\*[
& \mathbb{E}\left[\left(\pi(f) - \frac{1}{t}\sum_{k=0}^{t-1}
f(X_k)\right)^2\right]
\\
    &=\mathbb{E}\left[\left(\pi(f) - \frac{1}{t}\sum_{k=0}^{t-1}
    [\mu P^k](f) -  \frac{1}{t}\sum_{k=0}^{t-1} (f(X_k)-[\mu
    P^k](f))\right)^2\right]\\
    &=\left(\pi(f) - \frac{1}{t}\sum_{k=0}^{t-1} [\mu P^k](f)\right)^2
    +\mathbb{E}\left[\left(\frac{1}{t}\sum_{k=0}^{t-1} (f(X_k)-[\mu
    P^k](f))\right)^2\right]\\
    &=\left(\pi(f) - \frac{1}{t}\sum_{k=0}^{t-1} [\mu P^k](f)\right)^2
    +\frac{1}{t^2}\sum_{j=0}^{t-1}\sum_{k=0}^{t-1} \Cov
    (f(X_j),f(X_k))
\]
The variance term is bounded using \cref{fgcorSum}:
\*[
&\frac{1}{t^2}\sum_{j=0}^{t-1}\sum_{k=0}^{t-1} \Cov (f(X_j),f(X_k))\\
    &\qquad\frac{2\Vert f - \pi f\Vert_2^2}{\alpha t}
        + \frac{2^{7/2} \Vert \mu -\pi\Vert_2 \Vert f -\pi f \Vert_4^2}{\alpha^2 t^2}
        -\left(\frac{1}{t}\sum_{m=0}^{t-1}\mu P^m f-\pi f\right)^2
\]
Putting these together yields the desired result.
\end{proof}

\begin{remark}\normalfont
\label{midrangeremark}
We note that, as per \cref{hstarremark},
$\Vert f - \pi f\Vert \leq \Vert f\Vert_2 $. Similarly $\Vert f-\pi f\Vert_4\leq \Vert f\Vert_4$. Also in the case that $f$
is is $\pi$-essentially bounded, $\Vert f\Vert_2 \leq \Vert f\Vert_\infty$ and $\Vert f\Vert_4 \leq \Vert f\Vert_\infty$. These alternative norms may be substituted into the result as necessary in order to make the bounds tractable for a given application.
\end{remark}

\begin{remark}\label{compareremark}\normalfont
Comparing our above geometrically ergodic results to the $L_1$ results
of \citep{johndrow2015approximations} in the uniformly ergodic case,
we see that the $L_2$ and $L_1$ bounds we
establish above differ from the corresponding $L_1$ bound
of \citep{johndrow2015approximations}
only by a factor, which is
constant in time, but varies with the initial distribution (as is to be
expected when moving from uniform ergodicity to geometric ergodicity). For
the Mean-Squared-Error results, the $\Vert\cdot\Vert_\star$-norm in
that paper is based on the midrange-centred infinity norm, which
as per \cref{midrangeremark} is an upper bound on what we have.
\end{remark}

\begin{proof}[Proof of \cref{mse_perturb}]
For the first result, we proceed via bias-variance decomposition, as in the corresponding
result for the exact chain.  However, now the bias under consideration is
itself decomposed as the square of a sum of two components. The squared
sum is expanded simultaneously with the bias-variance expansion.
We compute that
\*[
& \mathbb{E}\left[\left(\pi(f) - \frac{1}{t}\sum_{k=0}^{t-1}
f(X^\epsilon_k)\right)^2\right]
\\
    &=\mathbb{E}\left[\left(\pi(f) - \pi_\epsilon(f) +
    \frac{1}{t}\sum_{k=0}^{t-1} \left[\pi_\epsilon - \mu
    P_\epsilon^k\right](f) -  \frac{1}{t}\sum_{k=0}^{t-1}
    (f(X^\epsilon_k)-[\mu P_\epsilon^k](f))\right)^2\right]\\
    &=\left([\pi-\pi_\epsilon](f)\right)^2
    +2\left([\pi-\pi_\epsilon](f)\right)\left(\pi_\epsilon(f) -
    \frac{1}{t}\sum_{k=0}^{t-1} [\mu P_\epsilon^k](f)\right)\\
    &\hspace{2em} + \left(\pi_\epsilon(f) -
    \frac{1}{t}\sum_{k=0}^{t-1} [\mu P_\epsilon^k](f)\right)^2
    +\frac{1}{t^2}\sum_{j=0}^{t-1}\sum_{k=0}^{t-1} \Cov
    (f(X^\epsilon_j),f(X^\epsilon_k))
\]
We bound the first component of the bias term using versions of \cref{lem:l2bound}
\*[
	\left([\pi-\pi_\epsilon](f)\right)^2
			& = \left([\pi-\pi_\epsilon](f-\pi_\epsilon f)\right)^2 \\
			& \leq \begin{cases}
						\normpi{\pi-\pi_\epsilon}^2 \normpiD{f-\pi_\epsilon f}^2 & \\
						\normpiE{\pi-\pi_\epsilon}^2 \normpiDE{f-\pi_\epsilon f}^2 & \\
					\end{cases}\\
			& \leq \begin{cases}
					\frac{\epsilon^2}{\alpha^2 - \epsilon^2} \normpiD{f-\pi_\epsilon f}^2 & \\
					\frac{\epseps^2}{(1-\rho_2)^2 - \epseps^2}  \normpiDE{f-\pi_\epsilon f}^2 &:\text{given } (*)
				\end{cases}
\]

We bound the variance term using \cref{fgcorSum}:
\*[
& \frac{1}{t^2}\sum_{j=0}^{t-1}\sum_{k=0}^{t-1} \Cov
(f(X^\epsilon_j),f(X^\epsilon_k))
\\
    &\qquad\leq  \frac{2\normpiDE{ f - \pi_\epsilon f}^2}{(1-\rho_2) t}
        + \frac{2^{7/2} \normpiE{ \mu -\pi_\epsilon} \norm{f -\pi_\epsilon f }_{L_4'(\pi_\epsilon)}}{(1-\rho_2)^2 t^2}
        -\left(\frac{1}{t}\sum_{m=0}^{t-1}\mu P_\epsilon^m f-\pi_\epsilon f\right)^2
\]
The negative term in this expression exactly cancels out the third bias term in the expansion.

Finally, we bound the second bias term using
\cref{lem:l2bound,perturberrorthm}:
\*[
	& \hspace{-1em} 2\lcr({[\pi-\pi_\epsilon](f)})\lcr({\pi_\epsilon(f) -
\frac{1}{t}\sum_{k=0}^{t-1} [\mu P_\epsilon^k](f)})\\
    & = 2\lcr({[\pi-\pi_\epsilon](f-\pi_\epsilon f)})\lcr({\lcr[{\pi_\epsilon-\frac{1}{t}\sum_{k=0}^{t-1} \mu P_\epsilon^k }](f-\pi_\epsilon f)})\\
		& \leq 2 \begin{cases}
				\frac{\epseps}{\sqrt{(1-\rho_2)^2 - \epseps^2}} \normpiD{f-\pi_\epsilon f} \frac{1-(1-(\alpha-\epsilon))^t}{t(\alpha-\epsilon)}\normpi{
		    \pi_\epsilon-\mu} \normpiD{f-\pi_\epsilon f} & \\
				\frac{\epseps^2}{(1-\rho_2)^2 - \epseps^2}  \normpiDE{f-\pi_\epsilon f}^2 \frac{1-\rho_2^t}{t(1-\rho_2)}\normpiE{
				\pi_\epsilon-\mu} \normpiDE{f-\pi_\epsilon f}
					&:\text{given } (*)
			\end{cases} \\
			& \leq 2 \begin{cases}
				\frac{\epsilon}{\sqrt{\alpha^2 - \epsilon^2}}\frac{1}{t(\alpha-\epsilon)}\normpi{\pi_\epsilon-\mu} \normpiD{f-\pi_\epsilon f}^2 & \\
				\frac{\epseps}{\sqrt{(1-\rho_2)^2 - \epseps^2}}  \frac{1}{t(1-\rho_2)}\normpiE{
				\pi_\epsilon-\mu}\normpiDE{f-\pi_\epsilon f}^2
						&:\text{given } (*)
				\end{cases}
\]
Putting these together yields the first and third results.

For the second and fourth result we use the fact that for any random variable, $Z$, and for any $a,b\in\RR$ the following holds:
\*[
\mathbb{E}[(Z-a)^2]
    &= 2\mathbb{E}[(Z-b)^2]+2(a-b)^2 - \mathbb{E}[(Z+a-2 b)^2]\\
    &\leq 2\mathbb{E}[(Z-b)^2]+2(a-b)^2
\]
\*[
& \mathbb{E}\lcr[{\lcr({\pi(f) - \frac{1}{t}\sum_{k=0}^{t-1}
f(X^\epsilon_k)})^2}]
\\
    &\qquad\leq 2([\pi-\pi_\epsilon](f))^2 +2 \mathbb{E}\lcr[{\lcr({\pi_\epsilon(f)  - \frac{1}{t}\sum_{k=0}^{t-1}
    f(X^\epsilon_k)})^2}]\\
    &\qquad= 2([\pi-\pi_\epsilon](f-\pi_\epsilon f))^2 \\
    &\qquad\qquad+2 \mathbb{E}\lcr[{\lcr({\pi_\epsilon(f)-\frac{1}{t}\sum_{k=0}^{t-1}
    [\mu P_\epsilon^k](f)  - \frac{1}{t}\sum_{k=0}^{t-1}
    (f(X^\epsilon_k)-[\mu P_\epsilon^k](f))})^2}]\\
    &\qquad= 2([\pi-\pi_\epsilon](f-\pi_\epsilon f))^2
        +2\lcr({\pi_\epsilon(f)-\frac{1}{t}\sum_{k=0}^{t-1}[\mu P_\epsilon^k](f)})^2\\
        &\qquad\qquad+2\mathbb{E}\lcr[{\lcr({\frac{1}{t}\sum_{k=0}^{t-1}
    (f(X^\epsilon_k)-[\mu P_\epsilon^k](f))})^2}] \\
    &\qquad= 2([\pi-\pi_\epsilon](f-\pi_\epsilon f))^2
        +2\lcr({\pi_\epsilon(f)-\frac{1}{t}\sum_{k=0}^{t-1}[\mu P_\epsilon^k](f)})^2\\
        &\qquad\qquad+\frac{2}{t^2}\sum_{j=0}^{t-1}\sum_{k=0}^{t-1} \Cov
    (f(X^\epsilon_j),f(X^\epsilon_k))
\]
Applying \cref{fgcor2} to bound the sum of covariances, we find that we are able to exactly cancel the second term in the final expression above. Using the same bound as before for the first expression, we get the final result.
\end{proof}

\section{Proof of  \cref{noisyoperatorthm}}
\label{ax:proofs-noisy}

Let
\*[
\gamma(x)
    &= \mathbb{E}_{y\sim q(y|x)} r(y|x)
    = \int r(y|x)q(y|x) d y \\
[\nu\Gamma]( d y )
    &= \nu(y)\gamma(y) d y \\
[\nu Z]( d y )
    &= \left[\int r(y|x) q(y|x)\nu(x) d x \right] d y
\]

\begin{lemma}
$P-\hat P = Z -\Gamma$
\end{lemma}
\begin{proof}
We first give expressions for the elements of measure for transitions
of the original chain. The first formula is the element of measure for
transition from an arbitrary, fixed initial point. It is defined for
us by the mechanics of the Metropolis--Hastings algorithm. The second
expression is the element of measure for transition from a sample from
an initial distribution, $\nu$. It is derived from the first expression
by integrating over the sample from $\nu$.
\*[
P(x,  d x' )
    &= \delta_x(  d x' ) \left[1-\int({{a}}(y|x)q(y|x) d y  \right]
    + {{a}}(x'|x) q(x'|x)  d x' \\
\left[\nu P\right](  d x' )
    &= \int\left[\delta_x(  d x' ) \left[1-\int{{a}}(y|x)q(y|x) d y
    \right] + {{a}}(x'|x) q(x'|x)  d x' \right]\nu(x) d x \\
    &= \left[\left[1-\int{{a}}(y|x')q(y|x') d y  \right]\nu(x')
    + \int{{a}}(x'|x) q(x'|x)\nu(x) d x \right]  d x' \\
\]
The second form of the second expression is an application of Fubini's
theorem. The exchange of the order of integration for the second term in
the expression is immediate. For the first term, for arbitrary non-negative functions $f$,
\*[
    \int_s\int_t f(s,t) \delta_t( d s ) d t  =\int_t\int_s  f(s,t)
\delta_t( d s ) d t  = \int_t  f(t,t)  d t =\int_s  f(s,s) d s
\]
Where the first equality is Fubini's theorem, the second comes from
integrating with respect to $s$, and the third comes from a change of
dummy variable.

Similarly, the elements of measure for transitions from the approximating
kernel are expressed below. The first expression, as above, is the element
of measure for transition from an arbitrary, fixed initial point. It
is defined for us by the mechanics of the noisy Metropolis--Hastings
algorithm. The second expression is again derived by integrating the
first against an initial measure, $\nu$.
\*[
\hat P(x,  d x' )
    &= \delta_x(  d x' ) \left[1-\iint{\hat{a}}(y|x,z)q(y|x)f_y(z)
     d z  d y  \right] \\
    &\qquad\qquad + \int {\hat{a}}(x'|x,z)
    q(x'|x)f_{x'}(z) d z  d x' \\
\left[\nu \hat P\right](  d x' )
    &= \int\bigg(\delta_x(  d x' )
    \left[1-\iint{\hat{a}}(y|x,z)q(y|x)f_y(z)
     d z  d y  \right] \\
&\qquad\qquad\qquad\qquad + \int {\hat{a}}(x'|x,z)
    q(x'|x)f_{x'}(z) d z  d x' \bigg)\nu(x) d x \\
    &= \left[1-\iint{\hat{a}}(y|x',z)q(y|x')f_y(z)  d z
     d y  \right]\nu(x')  d x'  \\
&\qquad\qquad\qquad\qquad + \left[\iint {\hat{a}}(x'|x,z)
    q(x'|x)f_{x'}(z)\nu(x)  d z d x \right]  d x'
\]
The same applications of Fubini's theorem occur as above.

We may now leverage our notation defined above to simplify the difference
of these elements of measure.
\*[
&\left[\nu (P-\hat P)\right] (  d x' )\\
    &\qquad=
    \left[\iint\bigg({\hat{a}}(y|x',z)-{{a}}(y|x')\bigg)q(y|x')f_y(z)
     d z  d y \right]\nu(x')  d x'  \\
    &\qquad\hspace{2em} + \left[\iint
    \bigg({{a}}(x'|x)-{\hat{a}}(x'|x,z)\bigg)
    q(x'|x)f_{x'}(z)\nu(x)  d z d x \right]  d x' \\
    &\qquad=\left[\int r(x'|x) q(x'|x)\nu(x)  d x \right]  d x'  -
    \left[\int r(y|x') q(y|x')  d y \right]\nu(x')  d x' \\
    &\qquad=[\nu(Z-\Gamma)](  d x' )
\]
From this one may conclude that $\left(P- \hat P  =  Z - \Gamma\right)$
as operators.
\end{proof}

\begin{proof}[Proof of \cref{noisyoperatorthm}]
It is obvious that if $\abs{r(y|x)}\leq R$ uniformly in $(x,y)\in \Xx^2$
then
\[\left(\normpi{\Gamma} \leq R\right) \ ,\] and
\[\left(\normpi{Z}\leq R \normpi{Q}\right) \ .\]
By applying the previous
lemma, given the assumptions stated,
\[\normpi{P-\hat P} \leq R
(1+\normpi{Q}) \ .\]
\end{proof}

\section{\texorpdfstring{$(L_\infty(\pi),\normpi{\cdot})$}{(L-infinity, L2)}-GE is distinct from $L_2$-GE for non-reversible chains}
\label{ax:weak-not-strong-example}

Let $\Xx = \NatsO$, and let $a$ be a probability mass function on $\Xx$. Define transition probabilities by
\[
  p_{ij}
    & = \begin{cases}
    a_j
      &: i = 0 \\
    1
      &: i>0,\ j=i-1 \\
    0
      &: \text{otherwise}
    \end{cases}
\]
Let $b_j = \sum_{i=j}^\infty a_j$. It is easy to verify that if $\sum_{j=1}^\infty b_j < \infty$ then $\pi_j = \frac{b_j}{\sum_{j=1}^\infty b_j}$ is the unique stationary probability mass function for $P = [p_{ij}]_{ij\in\Xx^2}$.

In the special case where $a_j = 2^{-j-1}$, we have $\pi = a$. We continue this example working exclusively with this choice of $a$. Now,
\[
  \delta_j P^n
    & = \begin{cases}
      \pi
        &: n\geq j+1 \\
      \delta_{n-j}
        &: n\leq j
  \end{cases}
\]
Thus, for any initial probability mass function, $\mu$,
\[
  [\mu P^n]_j = \sum_{i=0}^{n-1} \mu_i \pi_j + \mu_{j+n}
\]
If $\rnderiv{\mu}{\pi}(j) = \frac{\mu_j}{\pi_j} \leq \normpiInf{\mu} <\infty$ for all $j\in\Xx$ then
\[
  \normpi{\mu P^n - \pi}^2
    & = \sum_{j=0}^\infty \pi_j \lcr({\sum_{i=0}^{n-1} \mu_i + \frac{\mu_{j+n}}{\pi_j} -1 })^2 \\
    & = \sum_{j=0}^\infty \pi_j \lcr({-\sum_{i=n}^{\infty} \mu_i + \frac{\mu_{j+n}}{\pi_{j+n}} \frac{\pi_{j+n}}{\pi_j}})^2\\
    & =  \sum_{j=0}^\infty \pi_j \lcr({-\sum_{i=n}^{\infty} \frac{\mu_i}{\pi_i} \pi_i + \frac{\mu_{j+n}}{\pi_{j+n}} \frac{\pi_{j+n}}{\pi_j}})^2\\
    &\leq  \sum_{j=0}^\infty \pi_j \lcr({\sum_{i=n}^{\infty} \frac{\mu_i}{\pi_i} \pi_i + \frac{\mu_{j+n}}{\pi_{j+n}} \frac{\pi_{j+n}}{\pi_j}})^2\\
    &\leq \sum_{j=0}^\infty 2^{-j-1} \lcr({\sum_{i=n}^{\infty} \normpiInf{\mu} 2^{-i-1} + \normpiInf{\mu} 2^{-n}})^2\\
    & = \normpiInf{\mu}^2 \sum_{j=0}^\infty 2^{-j-1} (2^{-n+1})^2\\
    & = 4 \normpiInf{\mu}^2 (2^{-n})^2
\]
Hence $P$ is $(L_\infty(\pi),\normpi{\cdot})$-GE with optimal rate no larger than $1/2$.

For any $\alpha <\sqrt{0.5}$, let $\nu_j = (1-\alpha) (\alpha)^j$.  Then $\nu\in L_2(\pi)$, since
\[
  \normpi{\nu}^2
    & = \sum_{i=0}^\infty 0.5^{i+1} \lcr({\frac{(1-\alpha)(\alpha)^i }{0.5^{i+1}}})^2 \\
    & = 2(1-\alpha)^2 \sum_{i=0}^\infty (2\alpha^2)^i
        = \frac{2(1-\alpha)^2}{1 - 2\alpha^2}
\]
Moreover,
\[
  \normpi{\nu P^n - \pi}^2
    & =  \sum_{j=0}^\infty \pi_j \lcr({\sum_{i=0}^{n-1} \nu_i + \frac{\nu_{j+n}}{\pi_j} -1 })^2 \\
    & =  \sum_{j=0}^\infty 0.5^{j+1} \lcr({-\sum_{i=n}^{\infty} (1-\alpha)\alpha^i + (1-\alpha)\alpha^{j+n} (0.5)^{-j-1} })^2 \\
    & =  \alpha^{2n}\sum_{j=0}^\infty 0.5^{j+1} \lcr({-1 + 2(1-\alpha)(2\alpha )^j})^2 \\
    & =  \frac{\alpha^{2n}}{2} \sum_{j=0}^\infty ( 0.5^{j} -4(1-\alpha)\alpha^j +4(1-\alpha)^2 (2\alpha^2)^j)  \\
    & =  \frac{\alpha^{2n}}{2} \lcr({2 - \frac{4(1-\alpha)}{1-\alpha} + \frac{4(1-\alpha)^2}{1-2\alpha^2 }}) \\
    & = \frac{(2\alpha-1)^2}{1-2\alpha^2} \alpha^{2n}
\]
Thus the convergence rate starting from this initial measure is $\alpha$.

Since this is true for any $\alpha <1/\sqrt{2}$, this shows that the $L_2(\pi)$-GE optimal rate is no smaller than $\sqrt{0.5}$.
Hence the $(L_\infty(\pi),\normpi{\cdot})$-GE and $L_2(\pi)$-GE optimal rates are different.

\newpage
\section{Proof of \cref{lem:rt-comment} and \cref{lem:opt-rate-char}}
\label{ax:pf:lem:rt-comment}

\begin{proof}[Proof of \cref{lem:opt-rate-char}]
	Let
	\[
		\rho^\star
			& = \inf\lcr\{{\rho>0: \exists C:V\to\PosReals \st \forall n\in\Nats, \nu\in V\cap \Mm_{+,1} \quad \Norm{\nu P^n - \pi} \leq C(\nu) \rho^n}\} \ , \\
		\hat \rho
			& = \sup_{\mu\in V\cap \Mm_{+,1}} \limsup_{n\to\infty} \Norm{\mu P^n -\pi}^{1/n}
\]

	($\hat \rho \leq \rho^\star$):
	Let $\epsilon>0$
	\[
	\hat \rho
		& = \sup_{\mu\in V\cap \Mm_{+,1}} \limsup_{n\to\infty} \Norm{\mu P^n -\pi}^{1/n} \\
		& \leq \sup_{\mu\in V\cap \Mm_{+,1}} \limsup_{n\to\infty} \Norm{\mu P^n -\pi}^{1/n} \\
		& \leq \sup_{\mu\in V\cap \Mm_{+,1}} \limsup_{n\to\infty}  (C_\epsilon(\mu) (\rho^\star+\epsilon)^n)^{1/n}\\
		& =\rho^\star +\epsilon \ .
	\]
	Since $\epsilon$ is arbitrary, $\hat \rho \leq \rho^\star$.

	($\hat \rho \geq \rho^\star$):
	For all $\nu\in V\cap \Mm_{+,1}$, $\limsup_{n\to\infty} \Norm{\mu P^n -\pi}^{1/n}\leq \hat\rho$.
	Let $\epsilon>0$.
	Then for all $\mu\in V\cap \Mm_{+,1}$, $\Norm{\mu P^n -\pi}^{1/n}> \hat\rho+\epsilon$ for at most finitely many $n\in \Nats$. Let $C_\epsilon(\mu) = \max_{n\in\Nats}\rbra{1\vee \frac{\Norm{\mu P^n -\pi}}{(\rho+\epsilon)^n}}$.
	Then $C_\epsilon(\mu) <\infty$ since the maximum is over finitely many distinct elements.
	Therefore $\Norm{\mu P^n - \pi } \leq C_\epsilon(\mu) (\hat\rho+\epsilon)^n$ for all $n\in\Nats$. This implies that $\hat\rho+\epsilon \geq \rho^\star$. Since $\epsilon$ is arbitrary, $\hat\rho\geq \rho^\star$.
\end{proof}

\begin{proof}[Proof of \cref{lem:rt-comment}]
  [\textit{(iii)} $\iff$ \textit{(iv)}] is proven in \citep[Theorem 2.1]{roberts1997geometric}.
  [\textit{(iii)} $\implies$ \textit{(ii)}] follows from the inclusion $L_\infty(\pi)\subset L_2(\pi)$.
  [\textit{(ii)} $\implies$ \textit{(i)}] follows from Cauchy-Schwarz.

  [\textit{(ii)} $\implies$ \textit{(iii)}]:

  Without loss of generality, we may assume that $\rho$ is the optimal rate of $(L_\infty(\pi_\epsilon), \norm{\cdot}_{L_2(\pi)})$-geometric ergodicity;
  \[
    \rho = \sup_{\nu\in L_{\infty,0}(\pi)} \limsup_{t\to\infty}  \normpi{\nu P^t}^{1/t} \ .
  \]

  From the proof of \citet[Theorem~1]{roberts2001geometric}, $P$ is $\pi$-almost-everywhere geometrically ergodic with some unknown optimal rate.
  From \citep[Theorem 2.1]{roberts1997geometric}, $P$ is $L_2(\pi)$-geometrically ergodic with some unknown optimal rate, $\rho_2$, which is equivalent to the spectral radius of $P\restrict{L_{2,0}(\pi)}$; $\rho_2 = r(P\restrict{L_{2,0}(\pi)})$.

  It remains to be shown that $\rho_2 \leq \rho$.
  We will use the spectral measure decomposition of $P$, as in \citep{roberts1997geometric}.
  Suppose, for a contradiction, that $\rho_2 >\rho$.
  Let $\ol \rho = \frac{\rho+\rho_2}{2}$.
  Let $\Ee$ be the spectral measure of $P$, so that $\mu P^t = \int_{-1}^1 \lambda^t \mu \Ee(d\lambda)$.
  If $\rho_2 > \rho$ then either $\Ee([-\rho_2,-\ol \rho))\neq\zero $ or $\Ee((\ol \rho,\rho_2]))\neq \zero$.
  Assume (replacing $P$ by $P^2$, $\rho$ by $\rho^2$, and $\rho_2$ by $\rho_2^2$ if necessary) that $\Ee((\ol \rho,\rho_2])\neq \zero$ and  $\Ee((-1,0))= \zero$.
  Then there is some non-zero signed measure, $\nu$, in the range of $\Ee((\ol \rho,\rho_2])$.
  Since the spectral projections are orthogonal and $\set{1}\cap (\ol \rho,\rho_2] = \emptyset$, then $\nu\perp \pi$, and hence $\nu(\Xx) = 0$.
  Since $L_{\infty,0}(\pi)$ is dense in $L_{2,0}(\pi)$, there is a $\mu\in L_{\infty,0}(\pi)$ with $\normpi{\mu-\nu}<\normpi{\nu}/2$.
  Then, from the polarization identity, $\inner{\nu}{\mu}_{L_2(\pi)} \geq \frac{3}{8} \normpi{\nu}^2 >0$,  and $\mu\neq 0$.

  Let $R = \range(\int_{(\ol \rho,\rho_2]} \Ee(d \lambda))$. Then $\text{span}(\nu) \subset R$, so
  \[
  \normpi{\proj_{R}{\mu}}
    \geq \normpi{\proj_{\nu}{\mu}}
    \geq \frac{3}{8} \normpi{\nu} \\
  \]
  Then
  \[
    \normpi{\mu P^k}^2
      & = \inner{\mu P^k}{\mu P^k}_{L_2(\pi)} \\
      & = \inner{\mu}{\mu P^{2k}}_{L_2(\pi)} \\
      & = \inner{\mu}{\mu\int_{(0,\rho_2]} \lambda^{2k} \Ee(d \lambda)}_{L_2(\pi)} \\
      & \geq \inner{\mu}{\mu\int_{(\ol \rho,\rho_2]} \lambda^{2k} \Ee(d \lambda)}_{L_2(\pi)} \\
      & \geq \inner{\mu}{\mu\int_{(\ol \rho,\rho_2]} \ol \rho^{2k} \Ee(d \lambda)}_{L_2(\pi)} \\
      & = \ol\rho^{2k} \normpi{\proj_{R}{\mu}}^2 \\
      & \geq \ol\rho^{2k} \frac{9}{64} \normpi{\nu}^2 \ .
  \]
  Hence $\rho \geq \ol \rho$. This contradicts $\rho_2 > \rho$.

[\textit{(i)} $\implies$ \textit{(ii)}]:

Let the optimal rates of $(L_\infty(\pi_\epsilon), \norm{\cdot}_{L_1(\pi)})$-GE and $(L_\infty(\pi_\epsilon), \norm{\cdot}_{L_2(\pi)})$-GE be (respectively)
\[
  \rho
    & = \sup_{\mu\in L_{\infty,0}(\pi)}\limsup_{n\to\infty} \normpiO{\mu P^n}^{1/n} \ , &
  \rho_2
    & = \sup_{\mu\in L_{\infty,0}(\pi)}\limsup_{n\to\infty} \normpi{\mu P^n}^{1/n} \ .
\]
We want to show that $\rho_2 \leq \rho$.

Let $\epsilon>0$ be arbitrary. Let $\nu_\epsilon\in L_{\infty,0}(\pi)$ with
\[
  \limsup_{n\to\infty} \normpi{\nu_\epsilon P^n}^{1/n}
    & \geq \rho_2-\epsilon \ .
\]
Then, for some $c(\nu_\epsilon)>0$, for infinitely many $n\in\NN$
\[
  \normpi{\nu_\epsilon P^n}
    & \geq (\rho_2-2\epsilon)^n \ .
\]

Using the fact that $\normpiO{\mu} = \sup_{\substack{f\in L'_\infty(\pi) \\ \norm{f}_{L'_\infty(\pi)}}} \mu f$, and
using the self-adjointness of $P$ in $L_2(\pi)$ (since $P$ is reversible),
and using the fact that (a version of) $\rnderiv{\nu_\epsilon}{\pi}$ is some bounded function with $\norm{\rnderiv{\nu_\epsilon}{\pi}}_{L'_\infty(\pi)} = \norm{\nu_\epsilon}_{L_\infty(\pi)}$,
then for infinitely many $n\in\Nats$,
\[
\normpiO{\nu_\epsilon P^{2n}}
& = \sup_{\norm{f}_\infty\leq 1} \nu_\epsilon P^{2n} f \\
& \geq \frac{1}{\normpiInf{\nu_\epsilon}} \nu_\epsilon P^{2n} \rnderiv{\nu_\epsilon}{\pi} \\
& = \frac{1}{\normpiInf{\nu_\epsilon}} \inner{\nu_\epsilon P^{2n}}{\nu_\epsilon} \\
& = \frac{1}{\normpiInf{\nu_\epsilon}} \inner{\nu_\epsilon P^{n}}{\nu_\epsilon P^n} \\
& = \frac{1}{\normpiInf{\nu_\epsilon}} \normpi{\nu_\epsilon P^n}^2 \\
& \geq \frac{1}{\normpiInf{\nu_\epsilon}} (\rho_2-2\epsilon)^{2n}
\]
Thus $\rho_2-2\epsilon \leq \rho$. Since $\epsilon$ was arbitrary, we find that $\rho_2\leq \rho$.

\end{proof}

\end{document}